\documentclass[12pt,a4wide]{article}

\usepackage[a4paper,margin=2.5cm]{geometry}

\usepackage[usenames,dvipsnames]{xcolor}
\usepackage{amssymb,mathdots}
\usepackage{mathrsfs}
\usepackage{amsmath}
\usepackage{latexsym}
\usepackage{pstricks,pst-plot,pst-node,pst-text,pst-3d}
\usepackage{color,graphicx}
\usepackage{relsize}
\usepackage{tikz}
\usepackage{indentfirst}
\usepackage[hang]{footmisc}

\usepackage{setspace}
\def\UseBibLaTeX{1}
\usepackage[hyphens]{url}
\ifnum\UseBibLaTeX>0
\newenvironment{sloppypar*}
 {\sloppy\ignorespaces}
 {\par}
\usepackage{hyperref}

\makeatletter
\let\@fnsymbol\@arabic
\makeatother

\usepackage[style=authoryear-comp,backend=biber,uniquename=false,uniquelist=false,dashed=false,%
maxcitenames=2,maxbibnames=99]{biblatex}
\DeclareNameAlias{author}{family-given}
\DeclareRobustCommand{\varlambda}{\text{\usefont{OML}{txmi}{m}{it}\symbol{"15}}}

\makeatletter
\newcommand{\Vast}{\bBigg@{5}}
\makeatother

\newtheorem{theorem}{Theorem}[section]
\newtheorem{lemma}[theorem]{Lemma}

\newtheorem{definition}[theorem]{Definition}
\newtheorem{example}[theorem]{Example}

\newenvironment{proof}[1][Proof.]{\begin{trivlist}
\item[\hskip \labelsep {\bfseries #1}]}{\end{trivlist}}

\def\zede{\textit{\texttt{z}}}

\providecommand{\keywords}[1]
{
  \noindent\small	
  \textbf{\textsf{Keywords}} #1
}
\providecommand{\mathsubclass}[1]
{
  \noindent\small	
  \textbf{\textsf{Mathematics Subject Classification}} #1
}

\title{Invariantised Euler-Lagrange equations and conserved quantities for nonconservative Herglotz variational problems}
\author{T\^{a}nia M. N. Gon\c{c}alves\footnote{\hspace{0.1cm} \href{mailto:t.m.n.goncalves@ufcat.edu.br}{t.m.n.goncalves@ufcat.edu.br}, Institute of Mathematics and Technology, Federal University of Catalao, 75704-020 Catalao -- GO, Brazil}$^{\;\, ,2}$ \and Delfim F. M. Torres\footnote{\hspace{0.1cm} Center for Research and Development in Mathematics and Applications (CIDMA), 
Department of Mathematics, University of Aveiro, 3810-193 Aveiro, Portugal}
 \and Gast\~ao S. F. Frederico\footnote{\hspace{0.1cm} Federal University of Ceara, 62900-420 Russas -- CE, Brazil}}
\date{}

\begin{document}
\maketitle

\begin{abstract}
\noindent In this paper the structures of the generalised Euler-Lagrange equations and their associated conserved quantities are derived for one-dimensional Herglotz variational problems of order $n$. Their derivations use the framework of moving frames and invariant calculus of variations. The knowledge of these structures not only offers a geometric insight, it may provide a more efficient path for the determination of extremals. This is exemplified with a Herglotz problem invariant under the restricted Lorentz group $SO^+(1,2)$.
\end{abstract}

\keywords{Herglotz Variational Problems $\cdot$ Moving Frames $\cdot$ Lorentz Group $SO^+(1,2)$ $\cdot$ Nonholonomic Constraints $\cdot$ Invariant Generalised Euler-Lagrange Equations $\cdot$ First Noether-type Theorem}

\vspace{0.4cm}
\mathsubclass{53A55 $\cdot$ 58E30 $\cdot$ 70H03 $\cdot$ 49S05 $\cdot$ 70G65 $\cdot$ 70H33}

\section{Introduction}

Many physical systems described by differential equations arise from the classical variational principle, which selects the path or state that extremises the action. However, numerous physical processes fall outside this framework, particularly nonconservative processes. For instance, the motion of an accelerated charged particle emits electromagnetic radiation whose associated self‑force (the radiation reaction) cannot be captured by the classical principle. Likewise, an object moving through a resistive medium --- such as a sphere falling through a viscous fluid --- experiences drag forces that continuously dissipate mechanical energy into heat.

Dissipative effects motivated Herglotz, in the 1930s, to generalise the classical variational principle by replacing the action integral with a dynamical variable defined through a differential equation \parencite{GuentherGuentherGottschHerglotz}. \textcite{GeorgievaGuenther2002} subsequently established a Noether‑type theorem for the corresponding one‑dimensional first‑order Lagrangians. Later, \textcite{SantosMartinsTorres2014,SantosMartinsTorres2015} extended this framework to higher‑order one‑dimensional problems, deriving the generalised Euler–Lagrange equations and their associated conservation laws.

Although this coordinate-based formulation provides a clear and practical description of the generalised variational problem, it does not reveal the geometric mechanisms underlying the Herglotz principle. Over the last decade, \textcite{BravettiCruzTapias2017}, and \textcite{LeonSardon2017} pioneered the use of modern contact geometry for dissipative systems. Building on this, several works developed complementary geometric formulations of the Herglotz variational principle for first‑order Lagrangians on 1-contact manifolds with one independent variable (time). These include the contact formulation of singular systems via precontact geometry \parencite{LeonLainz2019}, the development of nonholonomic \parencite{LeonJimenezLainz2021} and vakonomic Herglotz dynamics \parencite{LeonLainzMunoz2021}, and the interpretation of Herglotz optimal control problems as contact vakonomic systems \parencite{LeonLainzMunoz2023}. These papers, together with the foundational analysis of contact Lagrangian systems in \textcite{GasetetAL2020}, provide a coherent geometric framework for Herglotz-type variational problems.

Further research in this coordinate-free setting includes the extension to nonsmooth trajectories \parencite{LopezColomboLeon2023}, the study of symmetry and thermodynamic consequences \parencite{BravettiGarcia2021,BravettiGarciaTapias2023}, and applications of contact geometry to dissipative, structural, or algebraic contexts \parencite{VijayanetAL2026,LiuTorres2022,SimoesetAL2025,CarinenaetAL2024}. Higher‑order contact Lagrangian systems have also been considered in \textcite{LeonetAL2021}.

While the literature on geometric formulations of Herglotz‑type systems is substantial, it does not yet provide a structural theory for the Herglotz Euler-Lagrange equations and their associated conserved quantities. The present work fills this gap by establishing both, within a fully intrinsic, coordinate‑free framework. An analogous structural theory is already available for the classical variational principle \parencite{GoncalvesMansfield2011}, and the results obtained here may be viewed as its natural extension to the Herglotz setting. Taken together, these observations motivate the structural and geometric analysis developed in this paper.

In what follows, the main results of this paper identify the geometric and algebraic structures underlying the generalised Euler-Lagrange equations and their associated conserved quantities for one-dimensional higher-order Herglotz variational problems. These structures not only provide a geometric understanding of the problem but may also yield a systematic and efficient method for solving generalised variational problems, as illustrated in the following motivating example.

\subsection*{Motivating example}
Consider the following generalised variational problem
\begin{equation}\label{dampedVar}
\frac{\mathrm{d}A}{\mathrm{d}\lambda}=\frac{1}{2}\left(t_\lambda^2-x_\lambda^2\right)-\alpha A,
\end{equation}
where $\alpha$ is a constant. This differential equation is invariant under the following group action
\begin{equation}\label{groupAction}
\widetilde{\lambda}=\lambda,\quad
\begin{pmatrix}\widetilde{t}\\
\widetilde{x}\end{pmatrix}=\begin{pmatrix}
\cosh{\theta} & -\sinh{\theta}\\
-\sinh{\theta} & \cosh{\theta}
\end{pmatrix}\begin{pmatrix}
t\\x
\end{pmatrix}+\begin{pmatrix}
a\\b
\end{pmatrix},\quad \widetilde{A}=A,
\end{equation}
where the matrix represents a boost in the $x$-direction and the vector $( a \quad b)^T$, a translation in time and space. 

By Herglotz results, the generalised Euler-Lagrange equations of \eqref{dampedVar} are
$$\left\{\begin{array}{l}
-t_{\lambda\lambda}-\alpha t_\lambda=0,\\
x_{\lambda\lambda}+\alpha x_\lambda=0.
\end{array}\right.$$
Since the variational problem \eqref{dampedVar} is invariant under the group action \eqref{groupAction}, so is its generalised Euler-Lagrange system, which can be written as
\begin{align}\label{gEL_t}
&\mathsf{E}^t(L) = -\frac{\mathrm{d}}{\mathrm{d}s}\left(\mathrm{e}^{\,\alpha \lambda}\eta\right)=0,\\\label{gEL_x}
&\mathsf{E}^x(L) = \mathrm{e}^{\,\alpha\lambda}\kappa\eta^2=0,
\end{align} 
where 
$$\eta=\sqrt{t_\lambda^2-x_\lambda^2}\quad \textrm{and}\quad \kappa=\frac{t_\lambda x_{\lambda\lambda}-x_\lambda t_{\lambda\lambda}}{(t_\lambda^2-x_\lambda^2)^{3/2}}$$
are invariants known as the \textit{Minkowski norm} and the \textit{Minkowski curvature}, respectively. So from this formulation we obtain immediately that the curvature is zero, i.e. the extremal is a straight line in spacetime, and the parametrisation is exponentially damped.

Furthermore, the conserved quantities from the Noether-type theorem can be written in the matrix form 
$\mathcal{F}(\lambda)A(t,x,t_\lambda,x_\lambda)\boldsymbol{\upsilon}=\mathbf{c}$, where $\mathcal{F}(\lambda)$ is an integrating factor, $\boldsymbol{\upsilon}$ is a vector of invariants of the group action and $\mathbf{c}=(c_1,c_2,c_3)^T$
is a vector of constants of integration, as shown below
$$\mathrm{e}^{\,\alpha \lambda}\begin{pmatrix}
\dfrac{t_\lambda}{\eta} & -\dfrac{x_\lambda}{\eta} & 0\\
-\dfrac{x_\lambda}{\eta} & \dfrac{t_\lambda}{\eta} & 0\\
\dfrac{tx_\lambda-xt_\lambda}{\eta} & \dfrac{xx_\lambda-tt_\lambda}{\eta} & 1
\end{pmatrix}\begin{pmatrix}
\eta\\
0\\
0
\end{pmatrix}=\begin{pmatrix}
c_1\\
c_2\\
c_3
\end{pmatrix},
$$
where the first conserved quantity comes from translation in time, the second from translation in space and the third from a boost in the $x$-direction.

Simplifying simultaneously these generalised conservation laws yields the following three equations in terms of $t$, $x$, $t_\lambda$ and $x_\lambda$:
\begin{align}\nonumber
&\dfrac{t_\lambda}{\eta}c_1 + \dfrac{x_\lambda}{\eta}c_2 = \mathrm{e}^{\,\alpha \lambda}\eta,\\\nonumber
&\dfrac{x_\lambda}{\eta}c_1 + \dfrac{t_\lambda}{\eta}c_2 = 0\\\nonumber 
& xc_1 + tc_2 +c_3 = 0.
\end{align}
These equations are generalised first integrals of the generalised Euler-Lagrange equations.

Obviously, the system is not difficult to solve in its original variables: the solution is 
\begin{align*}
&t(\lambda)=c_1+c_2\mathrm{e}^{\,-\alpha \lambda},\\
&x(\lambda)=c_3+c_4\mathrm{e}^{\,-\alpha \lambda}.
\end{align*}
Calculating the Minkowski norm and curvature, we obtain exactly the same results as those provided by the Equations \eqref{gEL_t} and \eqref{gEL_x}. The advantage of the use of the structured generalised Euler-Lagrange equations and conserved quantities is that it reduces the order of the problem.

In order to show the structures behind the Euler-Lagrange equations and the conserved quantities of a one-dimensional generalised variational problem of order $n$, we will need to present a brief review of concepts necessary to understand our results. These concepts range from moving frames, as formulated by \textcite{FelsOlver1998,FelsOlver1999}, to symbolic invariant calculus \parencite{Mansfield2010}.

\section{Moving frames, Adjoint action and symbolic invariant calculus}
Finding the solution to a system of differential equations can be a daunting task. However, when this system comes from a variational principle that is invariant under a prolonged group action, it is possible to pull it  back to a cross section where its solution is simpler to obtain. What allows this pull back is the notion of moving frame. We will briefly discuss this construction in this section, along with concepts in symbolic invariant calculus.

\subsection{Moving frame}\label{subsecMF}
Let $G$ be a Lie group acting smoothly on $J^n$, the $n^{\textrm{th}}$ jet space, i.e. 
$$\begin{array}{rcl}
G\times J^n & \rightarrow & J^n\\
(g,\zede) & \mapsto & \widetilde{\zede} = g\cdot \zede,
\end{array}$$
where $\zede=\left(x,u^{(n)}\right)$, with $x=(x_1,\dots,x_p)$ denoting the independent variables and $u^{(n)}$ the dependent variables and their derivatives up to order $n$.

Furthermore, suppose this action is free and regular in some domain $\mathcal{U}\subset J^n$. Under these conditions, an equivariant map $\rho: \mathcal{U}\rightarrow G$ can be defined such that 
$$\rho(\zede)\cdot \zede =k,$$
where $k$ is the unique element in the intersection of the orbit of $\zede$ and the cross section $\mathcal{K}$, which intersects transversally and uniquely the orbits of $\mathcal{U}$. This equivariant map is known as the \textit{right moving frame} relative to the cross section $\mathcal{K}$, whose coordinates are invariants under $G$. In other words, the moving frame ``normalises'' the points in $\mathcal{U}$ by moving them to the cross section $\mathcal{K}$.

The cross section $\mathcal{K}$ is defined by the so-called \textit{normalisation equations}, which are usually chosen to simplify the computations: thus, $\mathcal{K}$ is not unique. Let these normalisation equations be represented by
$$\psi_i(\widetilde{\zede})=0, \quad i=1,\dots,r,$$
where $r$ is the dimension of the Lie group $G$. Solving these for the group parameters, $(\varepsilon_1,\dots,\varepsilon_r)$, gives us the moving frame in parametric form.

\begin{example}\label{exFrame}
Consider the group action \eqref{groupAction} in our motivating example. Since that group depends on three parameters, we need three normalisation equations, which we choose to be $\widetilde{t}=0$, $\widetilde{x}=0$ and $\widetilde{x_\lambda}=0$. Note that $\widetilde{x_\lambda}$ is the induced action on $x_\lambda$ which is obtained via the chain rule,
$$\widetilde{x_\lambda}=\widetilde{x}_{\widetilde{\lambda}}=\frac{\mathrm{d}\widetilde{x}}{\mathrm{d}\widetilde{\lambda}}=\frac{1}{\mathrm{d}\widetilde{\lambda}/\mathrm{d}\lambda}\frac{\mathrm{d}\widetilde{x}}{\mathrm{d}\lambda}=-\sinh{\theta}\,t_\lambda+\cosh{\theta}\,x_\lambda.$$

Solving the normalisation equations for the group parameters yields
\begin{equation}\label{frame_param}
\theta = \ln\left(\sqrt{\frac{t_\lambda+x_\lambda}{t_\lambda-x_\lambda}}\right),\quad a = \frac{xx_\lambda-tt_\lambda}{\sqrt{t_\lambda^2-x_\lambda^2}},\quad b = \frac{tx_\lambda-xt_\lambda}{\sqrt{t_\lambda^2-x_\lambda^2}},
\end{equation}
the frame in parametric form, which can be written as
$$\rho(t,x,t_\lambda,x_\lambda)=\begin{pmatrix}
\dfrac{t_\lambda}{\eta} & -\dfrac{x_\lambda}{\eta} & \dfrac{xx_\lambda-tt_\lambda}{\eta}\\
-\dfrac{x_\lambda}{\eta} & \dfrac{t_\lambda}{\eta} & \dfrac{tx_\lambda-xt_\lambda}{\eta}\\
0 & 0 & 1
\end{pmatrix}$$
in matrix form, where $\eta =\sqrt{t_\lambda^2-x_\lambda^2}$.
\end{example}

\begin{theorem}
If $\rho$ is a right moving frame, then 
$$\left.\widetilde{\zede}\right|_{g=\rho(\zede)}=\left. g\cdot \zede\right|_{g=\rho(\zede)}=\rho(\zede)\cdot \zede$$
is invariant under the induced group action of $G$ on $J^n$. 
\end{theorem}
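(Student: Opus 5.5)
The plan is to use the equivariance of the right moving frame. For a right moving frame, equivariance means
$$\rho(h\cdot \zede)=\rho(\zede)\,h^{-1}\qquad\text{for all } h\in G,\ \zede\in\mathcal{U}$$
(wherever $h\cdot\zede\in\mathcal{U}$). I would first justify this identity directly from the definition in Subsection~\ref{subsecMF}, then substitute it into the transformed normalised point and use the group action axioms to cancel $h$.

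\textbf{Step 1 (equivariance).} Fix $h\in G$ and $\zede\in\mathcal{U}$. The points $\zede$ and $h\cdot\zede$ lie on the same orbit. That orbit meets $\mathcal{K}$ in the unique point $k=\rho(\zede)\cdot\zede$. By the action axioms,
$$\bigl(\rho(\zede)h^{-1}\bigr)\cdot(h\cdot\zede)=\rho(\zede)\cdot\zede=k\in\mathcal{K}.$$
So the group element $\rho(\zede)h^{-1}$ moves $h\cdot\zede$ onto the cross section. The element $\rho(h\cdot\zede)$ does the same, and since the intersection point is unique, both send $h\cdot\zede$ to $k$. Hence
$$\rho(h\cdot\zede)\,(h\cdot\zede)=\bigl(\rho(\zede)h^{-1}\bigr)(h\cdot\zede).$$
Freeness of the action makes the isotropy group of $h\cdot\zede$ trivial. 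This lets us cancel and conclude $\rho(h\cdot\zede)=\rho(\zede)h^{-1}$. I expect this step to be the main obstacle. The text defines $\rho$ only through $\rho(\zede)\cdot\zede=k$, so uniqueness of the group element requires both transversality and uniqueness of the intersection and freeness, not just one of them. Equivalently, $\rho$ is the unique solution of the normalisation equations $\psi_i(g\cdot\zede)=0$. I would point out exactly where each hypothesis enters.

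\textbf{Step 2 (invariance).} Consider the induced action $\zede\mapsto h\cdot\zede$ and evaluate the normalised expression at the transformed point:
$$\rho(h\cdot\zede)\cdot(h\cdot\zede)=\bigl(\rho(\zede)h^{-1}\bigr)\cdot(h\cdot\zede)=\rho(\zede)\cdot\bigl(h^{-1}h\cdot\zede\bigr)=\rho(\zede)\cdot\zede.$$
This uses only associativity of the action and the identity axiom. So $\widetilde{\zede}\big|_{g=\rho(\zede)}$ is unchanged when $\zede$ is replaced by $h\cdot\zede$, for every $h\in G$.

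Finally, I would note that the same argument applies componentwise. Each coordinate of $\rho(\zede)\cdot\zede$, such as $\eta$ in Example~\ref{exFrame}, is therefore an invariant function on $\mathcal{U}$.
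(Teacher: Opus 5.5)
Your proof is correct and is essentially the argument the paper defers to when it cites Mansfield (2010): the right-equivariance $\rho(h\cdot\zede)=\rho(\zede)h^{-1}$, followed by the cancellation $\rho(h\cdot\zede)\cdot(h\cdot\zede)=\rho(\zede)h^{-1}h\cdot\zede=\rho(\zede)\cdot\zede$. Your Step 1 re-derives equivariance, which the paper already builds into its definition of $\rho$. Only uniqueness of the orbit--cross-section intersection and freeness are used there; transversality is what makes $\rho$ smooth, not what gives the identity.
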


Using the definitions of left and right actions, it is not difficult to prove the latter result \parencite{Mansfield2010}.

For any prolonged group action on the jet space, the invariantised jet coordinates are represented as 
$$I^i=I(x_i)=\left.\widetilde{x_i}\right|_{g=\rho(\zede)},\quad I^\alpha=I(u^\alpha)=\left.\widetilde{u^\alpha}\right|_{g=\rho(\zede)},\quad I^\alpha_K=I(u^\alpha_K)=\left.\widetilde{u^\alpha_K}\right|_{g=\rho(\zede)},$$
where $i=1,\dots,p$, $\alpha=1,\dots,q$ and $K$ are multi-indices of differentiation. These are known as the \textit{normalised differential invariants}.

\begin{example}
Considering again the induced group action on $J^n$, defined in the motivating example, we calculate here some normalised differential invariants, namely,
\begin{align*}
&I^t=\left.\widetilde{t}\right|_{g=\rho(\zede)}=0, &
I^x=\left.\widetilde{x}\right|_{g=\rho(\zede)}=0,\\
&I^t_1=\left.\widetilde{t_\lambda}\right|_{g=\rho(\zede)}=\sqrt{t_\lambda^2-x_\lambda^2}, &I^x_1=\left.\widetilde{x_\lambda}\right|_{g=\rho(\zede)}=0,\\
&I^x_{11}=\left.\widetilde{x_{\lambda\lambda}}\right|_{g=\rho(\zede)}=\dfrac{t_\lambda x_{\lambda\lambda}-x_\lambda t_{\lambda\lambda}}{\sqrt{t_\lambda^2-x_\lambda^2}}. & 
\end{align*}
As expected $I^t$, $I^x$ and $I^x_1$ yield 0, since $\rho(\zede)$ is the solution to the normalisation equations. The invariants $I^t_1$ and $I^x_{11}$ are the lowest order differential invariants, from which all other invariants can be obtained. The well-known Minkowski curvature
$$\kappa = \dfrac{t_\lambda x_{\lambda\lambda}-x_\lambda t_{\lambda\lambda}}{(t_\lambda^2-x_\lambda^2)^{3/2}} \quad\textrm{ is equal to }\quad \frac{I^x_{11}}{(I^t_1)^2}.$$
\end{example}

It is possible to express the Minkowski curvature in terms of the generating differential invariants, $I^t_1$ and $I^x_{11}$, without solving for the frame. For that we use the generalisation of the Replacement Theorem, due to  \textcite{FelsOlver1999}, which allows to substitute all variables in an invariant function by its corresponding invariantised jet coordinates.

\subsection{Invariant differentiation}
In a similar way as for the normalised differential invariants, we can define the \textit{invariant differential operators} as
$$\mathcal{D}_i=\left.\widetilde{D_i}\right|_{g=\rho(\zede)},$$
where
$$\widetilde{D_i}=\frac{\mathrm{d}}{\mathrm{d}\widetilde{x_i}}=\sum_{j=1}^p\left(J^{-T}\right)_{ij}\frac{\mathrm{d}}{\mathrm{d}x_j}=\sum_{j=1}^p\left(J^{-T}\right)_{ij}D_j,$$
with $J=\dfrac{\mathrm{d}(\widetilde{x_1},\dots,\widetilde{x_p})}{\mathrm{d}(x_1,\dots,x_p)}$.

\begin{example}\label{badDiff}
Continuing with the motivating example, we know that 
$$I^x_{11}=\left.\widetilde{x_{\lambda\lambda}}\right|_{g=\rho(\zede)}=\frac{t_\lambda x_{\lambda\lambda}-x_\lambda t_{\lambda\lambda}}{\sqrt{t_\lambda^2-x_\lambda^2}}.$$

Calculating $\mathcal{D}_\lambda I^x_{11}$, we obtain
$$\mathcal{D}_\lambda I^x_{11}=\widetilde{D_\lambda}|_{g=\rho(\zede)}\,\widetilde{x_{\lambda\lambda}}|_{g=\rho(\zede)}=D_\lambda\left(\frac{t_\lambda x_{\lambda\lambda}-x_\lambda t_{\lambda\lambda}}{\sqrt{t_\lambda^2-x_\lambda^2}}\right).$$

We know that
$$\begin{array}{rl}
I^x_{111}&=\widetilde{x_{\lambda\lambda\lambda}}|_{g=\rho(\zede)}=\left.\left(\widetilde{D_\lambda}\widetilde{x_{\lambda\lambda}}\right)\right|_{g=\rho(\zede)}=\left(D_\lambda(-\sinh{\theta}\,t_{\lambda\lambda}+\cosh{\theta}\,x_{\lambda\lambda})\right)|_{g=\rho(\zede)}\\[10pt]
&=\left.(-\sinh{\theta}\,t_{\lambda\lambda\lambda}+\cosh{\theta}\,x_{\lambda\lambda\lambda})\right|_{g=\rho(\zede)}=\dfrac{t_\lambda x_{\lambda\lambda\lambda}-x_\lambda t_{\lambda\lambda\lambda}}{\sqrt{t_\lambda^2-x_\lambda^2}}.
\end{array}$$
Clearly, $\mathcal{D}_\lambda I^x_{11}\ne I^x_{111}$.
\end{example}

Thus, from Example \ref{badDiff}, we see that, in general, $\mathcal{D}_iI^\alpha_K\ne I^\alpha_{Ki}$, as opposed to
$$\frac{\mathrm{d}}{\mathrm{d}x_i}u^\alpha_K=u^\alpha_{Ki},$$
where $u^\alpha$ is \textit{bona fide}. In fact,
$$\mathcal{D}_iI^\alpha_K= I^\alpha_{Ki}+M^\alpha_{Ki},$$
where $M^\alpha_{Ki}$ is known as the \textit{error term}. This informs us that the processes of differentiation and invariantisation do not commute. Software packages for computing these error terms are available; they rely on the normalisation equations and the infinitesimal generators of the group action (introduced in the next section). A prominent example is AIDA, developed by \textcite{Hubert2007}.

\begin{example}\label{exSys}
Carrying on with the motivating example, now suppose that $t=t(\lambda,\nu)$ and $x=x(\lambda,\nu)$, and that $\nu$ is invariant under the group action. Since the normalisation equations are $\widetilde{t}=0$, $\widetilde{x}=0$ and $\widetilde{x_\lambda}=0$, the generating differential invariants are $I^t_1$, $I^t_2$, $I^x_{11}$ and $I^x_2$. As shown in Figure \ref{illustration}, there are two paths of differentiation that can be taken to reach $I^t_{12}$ and two for $I^x_{112}$.

\begin{figure}[h]
\centering
\caption{Diagram of invariants}\label{illustration}
\vspace{0.2cm}
\begin{tikzpicture}
\node (0) at (0,0) {$0$};
\node (1Inv) at (1,0) {$I^t_1$};
\node (2Inv) at (0,1) {$I^t_2$};
\node (12Inv) at (1,1) {$I^t_{12}$};
\draw[->] (0.1,-0.5) -- (0.9,-0.5);
\draw[->] (-0.5,0.1) -- (-0.5,0.9);
\node (Ds) at (0.5,-1) {$\mathcal{D}_\lambda$};
\node (Dtau) at (-1,0.5) {$\mathcal{D}_\nu$};
\node (0) at (5,0) {$0$};
\node (10) at (6,0) {$0$};
\node (11Inv) at (7,0) {$I^x_{11}$};
\node (2Inv) at (5,1) {$I^x_2$};
\node (12Inv) at (6,1) {$I^x_{12}$};
\node (112Inv) at (7,1) {$I^x_{112}$};
\draw[->] (5.1,-0.5) -- (5.9,-0.5);
\draw[->] (4.5,0.1) -- (4.5,0.9);
\node (Ds) at (5.5,-1) {$\mathcal{D}_\lambda$};
\node (Dtau) at (4,0.5) {$\mathcal{D}_\nu$};
\end{tikzpicture}
\end{figure}

Differentiating $I^t_1$ with respect to $\nu$ and $I^t_2$  with respect to $\lambda$ we obtain, respectively,
\begin{align}\label{diffIt1}
& \mathcal{D}_\nu I^t_1 = I^t_{12},\\\label{diffIt2}
& \mathcal{D}_\lambda I^t_2 = I^t_{12} -\dfrac{I^x_{11}I^x_2}{I^t_1}.
\end{align}
From \eqref{diffIt1} and \eqref{diffIt2} stems the relation
\begin{equation}\label{SysMotivating1}
\mathcal{D}_\nu I^t_1=\mathcal{D}_\lambda I^t_2+\dfrac{I^x_{11}I^x_2}{I^t_1}.
\end{equation}

Similarly, we have two paths of differentiation from $I^x_{112}$, namely
\begin{align*}
& \mathcal{D}_\nu I^x_{11} = I^x_{112} -\dfrac{I^t_{11}I^x_{12}}{I^t_1},\\
& \mathcal{D}_\lambda^2 I^x_2 = I^x_{112}-\dfrac{2I^x_{11}I^t_{12}+I^x_{111}I^t_2}{I^t_1}+\dfrac{2I^x_{11}I^t_{11}I^t_2+(I^x_{11})^2I^x_2}{(I^t_1)^2},
\end{align*}
which produce the differential identity
\begin{equation}\label{SysMotivating2}
\mathcal{D}_\nu I^x_{11}+\dfrac{I^t_{11}I^x_{12}}{I^t_1} = \mathcal{D}_\lambda^2 I^x_2 +\dfrac{2I^x_{11}I^t_{12}+I^x_{111}I^t_2}{I^t_1}-\dfrac{2I^x_{11}I^t_{11}I^t_2+(I^x_{11})^2I^x_2}{(I^t_1)^2}.
\end{equation}
\end{example}

From Example \ref{exSys}, we can observe that if the multi-indices of differentiation $J$, $K$, $L$ and $M$ are such that $JK=LM$, then $I^\alpha_{JK}=I^\alpha_{LM}$, which in turn implies that
\begin{equation}\label{syzygies}
\mathcal{D}_KI^\alpha_J-M^\alpha_{JK}=\mathcal{D}_MI^\alpha_L-M^\alpha_{LM}.
\end{equation}
Differential relations such as \eqref{syzygies} are also known as \textit{syzygies}. These will play an important role in our results.

Finally we present the last concept crucial to the structure of the generalised conserved quantities, proved in section \ref{IGVP}: the Adjoint action of a Lie group $G$ on its Lie algebra.

\subsection{Adjoint action}\label{AdjointRep}

Suppose that the Lie group $G$, parametrised by $\boldsymbol{\varepsilon}=(\varepsilon_1,\dots,\varepsilon_r)$, acts on the base space $(x,u)$, with $x=(x_1,\dots,x_p)$ and $u=(u^1,\dots,u^q)$, as follows
\begin{equation}\label{LieGroupAction}
\begin{array}{ll}
\widetilde{x_i}=\Xi^i(x,u;\boldsymbol{\varepsilon}), & i=1,\dots,p,\\
\widetilde{u^\alpha}=\Phi^\alpha(x,u;\boldsymbol{\varepsilon}), & \alpha=1,\dots,q.
\end{array}
\end{equation}

The associated Lie algebra, $\mathfrak{g}$, has as basis
$$\mathbf{v}_k=\left.\left(\sum_{i=1}^p\frac{\partial \Xi^i}{\partial \varepsilon_k}\frac{\partial}{\partial x_i}+\sum_{\alpha=1}^q\frac{\partial \Phi^\alpha}{\partial \varepsilon_k}\frac{\partial}{\partial u^\alpha}\right)\right|_{g=e}=\sum_{i=1}^p\xi^i_k\frac{\partial}{\partial x_i}+\sum_{\alpha=1}^q\phi^{\alpha}_{,k}\frac{\partial}{\partial u^\alpha}, \quad k=1,\dots ,r,$$
where $e$ is the identity element in $G$. These basis elements are known as the \textit{infinitesimal generators} of the group action.

The transformation defined in \eqref{LieGroupAction} induces an action on the jet space $J^n$. This action is known as the \textit{prolonged group action}. The Lie algebra of this prolonged action, $\mathcal{X}_G(J^n)$, is generated by 
$$\mathbf{w}_k=\mathsf{pr}^{(n)}\mathbf{v}_k=\mathbf{v}_k+\sum_{\alpha=1}^q\sum_K \phi^{\alpha}_{K,k}\frac{\partial}{\partial u^\alpha_K}, \quad k=1,\dots,r,$$
where the terms $\phi^{\alpha}_{K,k}$, with $1\le |K|\le n$, are
$$\phi^{\alpha}_{K,k}=D_K\left(\phi^{\alpha}_{,k}-\sum_{i=1}^p\xi^i_ku^\alpha_i\right)-\sum_{i=1}^p\xi^i_ku^\alpha_{K,i},\quad k=1,\dots,r,$$
with $u^\alpha_{K,i}=\partial u^\alpha_K/\partial x_i$. These vector fields are known as the \textit{prolonged infinitesimal generators}.

For simplicity of the formulae to be presented shortly, let $\zede$, as defined in the beginning of section \ref{subsecMF}, be written as $\zede=(\zede_1,\dots,\zede_\ell)$, and thus, the prolonged infinitesimal generators become
\begin{equation}\label{ProlongIG}
\mathbf{w}_i=\sum_{j=1}^\ell \left.\frac{\partial \widetilde{\zede_j}}{\partial \varepsilon_i}\right|_{g=e}\frac{\partial}{\partial \zede_j}=\sum_{j=1}^\ell \omega^j_i\frac{\partial}{\partial\zede_j}=\boldsymbol{\omega}_i(\zede)^T\nabla, \quad i=1,\dots,r.
\end{equation}

\begin{definition}
Let $G$ be a Lie group and $\mathcal{X}(J^n)$ the space of vector fields on $J^n$. The Adjoint action of $G$ on $\mathcal{X}(J^n)$ is defined as
$$
\begin{array}{rrcl}
Ad: & G\times \mathcal{X}(J^n) & \rightarrow & \mathcal{X}(J^n)\\
& (g,\mathbf{v}) & \mapsto & Ad_g(\mathbf{v})=\displaystyle{\sum_{i=1}^\ell f_i(\widetilde{\zede})\frac{\partial}{\partial \widetilde{z_i}}=\mathbf{f}(\widetilde{\zede})^T\widetilde{\nabla}},
\end{array}
$$
where 
$$\mathbf{v}=\sum_{j=1}^\ell f_j(\zede)\frac{\partial}{\partial\zede_j}.$$
This definition can be simplified to
\begin{equation}\label{adjointVector}
Ad_g(\mathbf{v})=\mathbf{f}(\widetilde{\zede})^T\frac{\partial(\widetilde{\zede_1},\dots,\widetilde{\zede_\ell})}{\partial(\zede_1,\dots,\zede_\ell)}^{-T}\nabla,
\end{equation}
where $\dfrac{\partial(\widetilde{\zede_1},\dots,\widetilde{\zede_\ell})}{\partial(\zede_1,\dots,\zede_\ell)}$ is the Jacobian of the map $\zede\mapsto g\cdot \zede=\widetilde{\zede}$.
\end{definition} 

Since $Ad_g$ is a linear map and $Ad_g(\mathbf{v})\in \mathcal{X}_G(J^n)$, for all $\mathbf{v}\in \mathcal{X}_G(J^n)$, this implies that the Adjoint action of $g\in G$ on a linear combination of the basis elements of $\mathcal{X}_G(J^n)$ can be written as linear combination of \eqref{ProlongIG}, as follows
\begin{equation}\label{adjointCoefficients}
Ad_g\left(\sum_{i=1}^r\alpha_i \mathbf{w}_i \right)=\sum_{i=1}^r \alpha_i Ad_g\left(\mathbf{w}_i\right)=\sum_{j=1}^r \sum_{i=1}^r \alpha_i \left(\mathcal{A}d(g)\right)_{ij}\mathbf{w}_j.
\end{equation}
Hence, this defines the matrix $\mathcal{A}d(g)$, which is a matrix representation of $G$, called the \textit{Adjoint representation of} $G$.
Equation \eqref{adjointCoefficients} can be written in matrix form as
\begin{equation}\label{LHSresult}
Ad_g\left(\sum_{i=1}^r\alpha_i\mathbf{w}_i\right)=\boldsymbol{\alpha}\,\mathcal{A}d(g)
\Omega(\zede)\nabla,
\end{equation}
where 
\begin{equation}\label{MatrixInfs}
\Omega(\zede)=\begin{pmatrix}
\boldsymbol{\omega}_1(\zede)^T\\
\vdots\\
\boldsymbol{\omega}_r(\zede)^T
\end{pmatrix}
\end{equation}
is the matrix of infinitesimals (see \eqref{ProlongIG}).

However, from \eqref{adjointVector} we also know that 
\begin{equation}\label{adjointTrans}
Ad_g\left(\sum_{i=1}^r\alpha_i\,\mathbf{w}_i\right)=\sum_{i=1}^r \alpha_i\,\boldsymbol{\omega}_i(\widetilde{\zede})^T\dfrac{\partial(\widetilde{\zede_1},\dots,\widetilde{\zede_\ell})}{\partial(\zede_1,\dots,\zede_\ell)}^{-T}
\nabla.
\end{equation}
As for Equation \eqref{adjointCoefficients}, we can represent Equation \eqref{adjointTrans} in matrix form as
\begin{equation}\label{RHSresult}
Ad_g\left(\sum_{i=1}^r\alpha_i\mathbf{w}_i\right)=\boldsymbol{\alpha}\,\Omega(\widetilde{\zede})\dfrac{\partial(\widetilde{\zede_1},\dots,\widetilde{\zede_\ell})}{\partial(\zede_1,\dots,\zede_\ell)}^{-T}\nabla.
\end{equation}

Observing \eqref{LHSresult} and \eqref{RHSresult}, we can conclude that
\begin{equation}\label{essentialPiece}
\mathcal{A}d(g)\Omega(\zede)=\Omega(\widetilde{\zede})\dfrac{\partial(\widetilde{\zede_1},\dots,\widetilde{\zede_\ell})}{\partial(\zede_1,\dots,\zede_\ell)}^{-T}.
\end{equation}
This equality is essential for the proof of our results, but before we proceed, we shall determine the Adjoint representation of the $(1+1)$ Poincaré group present in our motivational example.

\begin{example}
To obtain the Adjoint representation of the $(1+1)-$dimensional Poincaré group, $ISO(1,1)$, consider its infinitesimal generators
$$\partial_t,\quad \partial_x\quad \textrm{and}\quad -x\partial_t-t\partial_x.$$

Let $g \in ISO(1,1)$ act on 
$$\alpha\partial_t+\beta\partial_x+\gamma(-x\partial_t-t\partial_x),$$
as follows
$$g\cdot(\alpha\partial_t+\beta\partial_x+\gamma(-x\partial_t-t\partial_x))=\alpha\partial_{\widetilde{t}}+\beta\partial_{\widetilde{x}}+\gamma(-\widetilde{x}\partial_{\widetilde{t}}-\widetilde{t}\partial_{\widetilde{x}}).$$
Substituting $\widetilde{t}$, $\widetilde{x}$, and the transformed differential operators $\partial_{\widetilde{t}}$ and $\partial_{\widetilde{x}}$ by \eqref{groupAction} and 
$$\begin{pmatrix}
\partial_{\widetilde{t}}\\
\partial_{\widetilde{x}}
\end{pmatrix}=\frac{\partial(\widetilde{t},\widetilde{x})}{\partial(t,x)}^{-T}\begin{pmatrix} \partial_t\\ \partial_x\end{pmatrix},$$
respectively, we obtain
\begin{equation}\label{AdgMotivating}
\begin{pmatrix} \alpha & \beta & \gamma\end{pmatrix}\underbrace{\begin{pmatrix}\cosh{\theta} & \sinh{\theta)} & 0\\
\sinh{\theta} & \cosh{\theta} & 0\\
-b\cosh{\theta}-a\sinh{\theta} & -b\sinh{\theta}-a\cosh{\theta} & 1\end{pmatrix}}_{\mathcal{A}d(g)}\begin{pmatrix}
\partial_t\\
\partial_x\\
-x\partial_t-t\partial_x
\end{pmatrix}.
\end{equation}
\end{example}

Having briefly summarised some moving frame concepts and symbolic invariant calculus, we proceed to the derivation of the generalised Euler-Lagrange system and its conserved quantities, as it bears many similarities to their invariantised versions, as we will see in section \ref{IGVP}.

\section{Generalised Variational Problem}

In this section, we use the symmetry method to derive the generalised Euler-Lagrange equations, as it simultaneously yields the associated generalised conserved quantities. 

Consider the generalised variational problem
\begin{equation}\label{varProblem1}
\frac{\mathrm{d}A}{\mathrm{d}\lambda}=L(\lambda,u^{(n)},A),
\end{equation}
where the Lagrangian is a smooth function of the independent variable, $\lambda$, the dependent variables and their derivatives up to order $n$, $u^{(n)}$, and the functional itself, $A$. Throughout this work, we consirder Herglotz Lagrangians to be affine in $A$, as in all classical and modern examples of Herglotz variational principle. Moreover, we assume that values of $u^\alpha_K$, for $\alpha=1,\dots,q$ and $K$ a multi-index of differentiation with $0\le |K|\le n-1$, are prescribed at an initial state $0$ and at a final state $\varlambda$. Note that for $A(\lambda)$ to be well-defined, $A(0)$ must have the same value, independently of $u(\lambda)$.

Since our results rely on the moving-frame method, which requires the group action to be free and regular on a domain $\mathcal{U}\subset J^n$, from now on, we shall suppose that the group acts freely and regularly. Thus, with this in mind, we introduce the following definition of a variational symmetry group for one-dimensional Herglotz variational problems.

\begin{definition}
Suppose $\mathcal{I}$ is a subdomain with closure $\bar{\mathcal{I}}\subset X$ and $u$ is a smooth function defined over $\mathcal{I}$. Then a local group of transformations $G$, which acts on the base space, $X\times U$, is a \textbf{variational symmetry group} of the functional defined by \eqref{varProblem1}, if whenever $A(\lambda)$ satisfies \eqref{varProblem1}, the transformed functional $\widetilde{A}(\widetilde{\lambda})$ satisfies
\begin{equation}
\frac{\mathrm{d}\widetilde{A}}{\mathrm{d}\widetilde{\lambda}}=L(\widetilde{\lambda},\widetilde{u^{(n)}},\widetilde{A}).
\end{equation}
\end{definition}

Let $g\in G$ act on the base space as follows:
\begin{equation}\label{groupAct}\begin{array}{l}
\widetilde{\lambda}=\lambda\\
\widetilde{u^\alpha}=\Phi^\alpha(\lambda,u;\varepsilon),\quad \alpha=1,\dots,q,
\end{array}
\end{equation}
where $\varepsilon$ is the parameter and $\Phi^\alpha(\lambda,u;0)=u^\alpha$. Note we assume that the group action leaves the independent variable invariant; if not, we can reparametise with respect to an invariant independent variable. Then, the infinitesimals are
$$\tau(\lambda,u)=\left.\frac{\mathrm{d}\widetilde{\lambda}}{\mathrm{d}\varepsilon}\right|_{\varepsilon=0}=0\quad \textrm{and}\quad \phi^\alpha(\lambda,u)=\left.\frac{\mathrm{d}\Phi^\alpha}{\mathrm{d}\varepsilon}\right|_{\varepsilon=0}.$$ 

Finally, suppose that $G$ is the variational symmetry group of the functional defined by \eqref{varProblem1}. Letting $g\in G$ act on \eqref{varProblem1}, followed by differentiation with respect to the group parameter and evaluation at $\varepsilon=0$ yields

\begin{align}\nonumber
&\quad\left.\frac{\mathrm{d}}{\mathrm{d}\varepsilon}\left(\frac{\mathrm{d}}{\mathrm{d}\widetilde{\lambda}}A(\widetilde{\lambda},\widetilde{u^{(n)}})\right)\right|_{\varepsilon=0}=\left.\frac{\mathrm{d}}{\mathrm{d}\varepsilon}L(\widetilde{\lambda},\widetilde{u^{(n)}},A(\widetilde{\lambda},\widetilde{u^{(n)}}))\right|_{\varepsilon=0}\\\nonumber
\Longleftrightarrow &\quad\frac{\mathrm{d}}{\mathrm{d}\lambda}\left.\frac{\mathrm{d}}{\mathrm{d}\varepsilon}A(\lambda,\widetilde{u^{(n)}})\right|_{\varepsilon=0}=\sum_{\alpha=1}^q\sum_K\frac{\partial L}{\partial u^\alpha_K}\frac{\mathrm{d}^{|K|}}{\mathrm{d}\lambda^{|K|}}\phi^\alpha+\frac{\partial L}{\partial A}\left.\frac{\mathrm{d}}{\mathrm{d}\varepsilon}A(\lambda,\widetilde{u^{(n)}})\right|_{\varepsilon=0}\\\label{diffwoutInt}
\Longleftrightarrow &\quad \frac{\mathrm{d}}{\mathrm{d}\lambda}\mathcal{A}(\lambda)-\frac{\partial L}{\partial A}\mathcal{A}(\lambda)=\sum_{\alpha=1}^q\sum_K\frac{\partial L}{\partial u^\alpha_K}\phi^\alpha_K,
\end{align}
where $K$ is a multi-index of differentiation with $0\le |K|\le n$ and
$$\mathcal{A}(\lambda)=\left.\frac{\mathrm{d}}{\mathrm{d}\varepsilon}A(\lambda,\widetilde{u^{(n)}})\right|_{\varepsilon=0}.$$

Multiplying \eqref{diffwoutInt} by an integrating factor $F=F(\lambda)$, such that
$$F\cdot\left(-\frac{\partial L}{\partial A}\right)=\frac{\mathrm{d}F}{\mathrm{d}\lambda}$$
we obtain
\begin{equation}\label{diffeqbefInt}
\frac{\mathrm{d}}{\mathrm{d}\lambda}\left(F(\lambda)\mathcal{A}(\lambda)\right)=F(\lambda)\sum_{\alpha=1}^q\sum_K\frac{\partial L}{\partial u^\alpha_K}\phi^\alpha_K.
\end{equation}

For convenience, we normalise the integrating factor by setting
$$\mathcal{F}(\lambda)=\frac{F(\lambda)}{F(0)}.$$ Hence, the solution to \eqref{diffeqbefInt} is given by
\begin{equation}\label{solucao}
\mathcal{F}(\varlambda)\mathcal{A}(\varlambda)-\mathcal{A}(0)=\int_0^\varlambda \mathcal{F}(\lambda)\left(\sum_{\alpha=1}^q\sum_K\frac{\partial L}{u^\alpha_K}\phi^\alpha_K\right)\,\mathrm{d}\lambda.
\end{equation}

Since we are looking for $u(\lambda)$ such that $A(\lambda)$ is stationary, i.e. an extremal, this means that the left-hand side of \eqref{solucao} must be zero. Thus,
\begin{eqnarray}\label{firstVariation}
\int_0^\varlambda \mathcal{F}\left(\lambda\right)
\sum_{\alpha=1}^q\sum_K \frac{\partial L}{\partial u^\alpha_K}\phi^\alpha_K\;\mathrm{d}\lambda &\kern-7pt=&\kern-7pt 0
\end{eqnarray}
where 
$$\mathcal{F}\left(\lambda\right)=\exp\left(-\int_0^{\lambda}\frac{\partial L}{\partial A}\mathrm{d}\theta\right).$$

Using integration by parts in \eqref{firstVariation}, we obtain
$$\begin{array}{ll}
&\displaystyle{\int^\varlambda_0 \sum_{\alpha=1}^q\left(\sum_{K}(-1)^{|K|}\frac{\mathrm{d}^{|K|}}{\mathrm{d}\lambda^{|K|}}\left(\mathcal{F}\left(\lambda\right)\frac{\partial L}{\partial u^\alpha_K}\right)\right)\phi^\alpha\,\mathrm{d}\lambda}\\
&\displaystyle{+\left.\left(\sum_{\alpha=1}^q\sum_K\sum_{k=1}^{|K|}(-1)^{k-1}\frac{\mathrm{d}^{k-1}}{\mathrm{d}\lambda^{k-1}}\left(\mathcal{F}\left(\lambda\right)\frac{\partial L}{\partial u^\alpha_K}\right)\frac{\mathrm{d}^{|K|-k}}{\mathrm{d}\lambda^{|K|-k}}\phi^\alpha\right)\right|_{\lambda=0}^{\lambda=\varlambda}=0,}
\end{array}$$
where the multi-index $K$ in the first line has order comprised between 0 and $n$, and in the second line between 1 and $n$.

Thus, the generalised Euler-Lagrange equations are
\begin{equation}
\mathsf{E}_{\mathcal{F}}^\alpha(L)=0, \quad \alpha=1,\dots,q,
\end{equation}
where $\mathsf{E}_{\mathcal{F}}^\alpha$ is the modified (weighted) Euler operator with respect to $u^\alpha$, similar to the one defined by \textcite{Olver1995},
$$\mathsf{E}_{\mathcal{F}}^\alpha=\sum_{K}(-1)^{|K|}\frac{\mathrm{d}^{|K|}}{\mathrm{d}\lambda^{|K|}}\left(\mathcal{F}\left(\lambda\right)\frac{\partial}{\partial u^\alpha_K}\right),\quad 0\le |K|\le n,$$
and the generalised conserved quantity along those solutions is
$$\sum_{\alpha=1}^q\sum_K\sum_{k=1}^{|K|}(-1)^{k-1}\frac{\mathrm{d}^{k-1}}{\mathrm{d}\lambda^{k-1}}\left(\mathcal{F}\left(\lambda\right)\frac{\partial L}{\partial u^\alpha_K}\right)\frac{\mathrm{d}^{|K|-k}}{\mathrm{d}\lambda^{|K|-k}}\phi^\alpha=\textrm{constant}.$$
The above generalised Euler-Lagrange equations and conserved quantity agree with those in the literature \parencite{GuentherGuentherGottschHerglotz,GeorgievaGuenther2002,SantosMartinsTorres2014,SantosMartinsTorres2015}, when we take $\tau=\mathrm{d}\widetilde{\lambda}/\mathrm{d}\varepsilon|_{\varepsilon=0}=0$.

Before we proceed to the derivation of the invariantised version of the generalised Euler-Lagrange equations and their conserved quantities, we will need to present a result regarding constraints on Herglotz generalised variational problems. This result will be needed for the application presented in section \ref{HerglotzApp}. 

Since a holonomic constraint can be viewed as a special case of a nonholonomic constraint, we will only prove here the Lagrange multiplier rule for nonholonomic constraints.

\begin{theorem}\textbf{(Lagrange Multiplier rule for nonholonomic constraints)}
Let 
\begin{equation}\label{varTheorem}
\frac{\mathrm{d}A}{\mathrm{d}\lambda}=L(\lambda,u^{(n)},A)
\end{equation}
be a Herglotz variational problem with prescribed initial and final states, where $u:\mathcal{I}\subset\mathbb{R}\rightarrow \mathbb{R}^q$ is sufficiently smooth. Furthermore, let 
\begin{equation}\label{constraints}
g\left(\lambda,u^{(n)}\right)=0
\end{equation}
be a system of nonholonomic constraints, where $g=(g^1,\dots,g^m):\mathcal{I}\times \mathbb{R}^{q\times n}\rightarrow \mathbb{R}^m$. Suppose there exists $v^i$ variables, for $i=1,\dots,m$, of type $u^\alpha_K$, with $\alpha=1,\dots,q$ and $K$ multi-indices of differentiation with $0\le |K|\le n$, such that
$$\det\left(\frac{\partial(g^1,\dots,g^m)}{\partial(v^1,\dots,v^m)}\right)\ne 0,$$
known as the regularity assumption. Moreover, suppose that $u$ satisfies the constraints \eqref{constraints} and is a weak constrained extremal of \eqref{varTheorem}. Then, there exists
$$\mu:\mathcal{I}\rightarrow\mathbb{R}^m$$
such that $u$ satisfies the generalised Euler-Lagrange equations of the augmented Lagrangian
$$L\left(\lambda,u^{(n)},A\right)-\mu(\lambda)\cdot g\left(\lambda,u^{(n)}\right).$$
\end{theorem}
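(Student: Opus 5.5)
We will reduce the statement to a classical first-variation argument with the Herglotz integrating factor. Fix a constrained extremal $u$ and consider variations $u+\epsilon\varphi$ with $\varphi^\alpha$ and its derivatives up to order $n-1$ vanishing at $0$ and $\varlambda$. Differentiating \eqref{varTheorem} along the variation at $\epsilon=0$ gives, exactly as in the derivation of \eqref{diffwoutInt}, a linear ODE
$$\frac{\mathrm{d}}{\mathrm{d}\lambda}\mathcal{A}-\frac{\partial L}{\partial A}\mathcal{A}=\sum_{\alpha,K}\frac{\partial L}{\partial u^\alpha_K}\varphi^\alpha_K,\qquad \mathcal{A}(0)=0.$$
Multiplying by $\mathcal{F}(\lambda)=\exp\bigl(-\int_0^\lambda \partial L/\partial A\bigr)$ and integrating as in \eqref{solucao} gives the condition $\mathcal{F}(\varlambda)\mathcal{A}(\varlambda)=\int_0^\varlambda \mathcal{F}\sum \frac{\partial L}{\partial u^\alpha_K}\varphi^\alpha_K\,\mathrm{d}\lambda$. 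A weak constrained extremal requires this to vanish for all \emph{admissible} $\varphi$. Admissible here means $\varphi$ satisfies the linearised constraints $\sum_{\alpha,K}\frac{\partial g^j}{\partial u^\alpha_K}\varphi^\alpha_K=0$, $j=1,\dots,m$. Since $\mathcal{F}>0$, the problem becomes a classical one: the linear functional $\ell(\varphi)=\int_0^\varlambda \mathcal{F}\,\partial_{u^{(n)}}L\cdot\varphi^{(n)}$ vanishes on the kernel of the linear differential operator $\mathcal{G}\varphi=\partial_{u^{(n)}}g\cdot\varphi^{(n)}$.

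Next we construct the multiplier. We set $\mu(\lambda)=\mathcal{F}(\lambda)\nu(\lambda)$, where $\nu$ will be chosen so that the Euler operator components along the $v^i$ directions are cancelled. Concretely, we split the variables of type $u^\alpha_K$ into the $v^i$ from the regularity assumption and the remaining free ones. Because $\partial g/\partial v$ is invertible, we define $\mu$ as the solution of the linear system obtained by requiring that the weighted Euler expressions $\mathsf{E}_{\mathcal{F}}$ of $L-\mu\cdot g$ vanish in the components associated with the $v^i$. After integration by parts this is a linear ODE system in $\mu$ of order at most $n$, whose leading coefficient matrix is $\partial g/\partial v$. It is therefore solvable locally, by standard existence theory for linear ODEs with invertible leading term. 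Note that $\mu\cdot g$ contains no $A$, so it does not alter $\partial L/\partial A$, and $\mathcal{F}$ is the same for the augmented Lagrangian. This is why absorbing $\mathcal{F}$ into $\mu$ is legitimate.

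Then we prove the remaining equations. For arbitrary $\varphi$ we have $\int\mu\cdot\mathcal{G}\varphi=0$ on admissible $\varphi$. Hence the first variation of the augmented problem, $\int \mathcal{F}\,\partial(L-\mu g)\cdot\varphi^{(n)}$, agrees with the original on admissible variations and is zero there. By the choice of $\mu$, it depends only on the free components of $\varphi$. By the implicit function theorem applied to the linearised constraint (regularity assumption), any choice of the free components with compact support extends to an admissible $\varphi$. The fundamental lemma of the calculus of variations then gives $\mathsf{E}^\alpha_{\mathcal{F}}(L-\mu\cdot g)=0$ for the remaining components too.

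The main obstacle is this last step: showing that free components can be prescribed arbitrarily and completed to admissible variations with the correct boundary conditions at $0$ and $\varlambda$. This is a controllability or normality issue for the differential constraint. We would first handle it for compactly supported free variations, solving the linearised constraint for the $v$-components as an initial value problem starting at zero. If the endpoint conditions fail, we would invoke the weak-extremal hypothesis to assume normality, as in classical treatments of the problem of Lagrange (e.g. the nonholonomic rule underlying \textcite{Olver1995}-style arguments), and treat the Herglotz weight $\mathcal{F}$ purely as a positive factor.
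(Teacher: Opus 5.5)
\textbf{Verdict: there is a genuine gap.} Your reduction to a classical problem is the standard route: a fixed positive weight $\mathcal{F}$, $\mu\cdot g$ not affecting $\partial L/\partial A$, a multiplier from an adjoint ODE, then the fundamental lemma. It is also more careful than the paper's proof, which takes the vanishing of $\int_0^\varlambda\mathcal{F}\,\hat{\mathrm{d}}L(v)\,\mathrm{d}\lambda$ on all of $V(TC)$ for granted and then reads off a \emph{pointwise} dependence of $\hat{\mathrm{d}}L$ on the $\hat{\mathrm{d}}g^i$.

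The gap is in your first step, not the one you flag: ``admissible here means $\varphi$ satisfies the linearised constraints''. Extremality only kills the first variation along tangents $\partial_\epsilon u_\epsilon|_{\epsilon=0}$ of families with $g(\lambda,u_\epsilon^{(n)})=0$ and the prescribed end data. To realise every solution of the linearised constraint as such a tangent, you need an implicit-function argument for a surjective linearised constraint/endpoint map, i.e.\ normality of $u$. Neither the regularity assumption nor weak extremality supplies this.

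Without it the statement is false. Take $q=2$, $n=1$, $L=u^1$ (so $\mathcal{F}\equiv1$), $g=u^2_\lambda-(u^1_\lambda)^2$, $v^1=u^2_\lambda$ (so $\partial g/\partial v^1=1$), and $u(0)=u(\varlambda)=0$. Then $\int_0^\varlambda(u^1_\lambda)^2\,\mathrm{d}\lambda=u^2(\varlambda)-u^2(0)=0$ forces $u\equiv0$. This is the only admissible curve, hence trivially a weak constrained extremal. Yet along $u\equiv0$ we get $\mathsf{E}^1(L-\mu g)=1-\frac{\mathrm{d}}{\mathrm{d}\lambda}(2\mu u^1_\lambda)=1$ for every $\mu$. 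The failure is exactly at your first step: the linearised constraint $\varphi^2_\lambda=0$ leaves $\varphi^1$ free, while $\int_0^\varlambda\varphi^1\,\mathrm{d}\lambda\neq0$. So you cannot ``invoke the weak-extremal hypothesis'' to obtain normality. You must either add it as a hypothesis (no $\mu\not\equiv0$ with $\mathsf{E}^\alpha_{\mathcal{F}}(\mu\cdot g)=0$ along $u$ for all $\alpha$), or settle for the form $\mu_0\,\mathsf{E}^\alpha_{\mathcal{F}}(L)=\mathsf{E}^\alpha_{\mathcal{F}}(\mu\cdot g)$ with $(\mu_0,\mu)\neq0$. The paper's argument does not get around this either.

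Conversely, the endpoint obstacle you single out needs no normality once step one is granted, at least under classical regularity. Complete compactly supported free components by solving the linearised constraint from zero initial data. The terminal jets of the completed components are then finitely many functionals $\int_0^\varlambda\psi_i\cdot\varphi_{\mathrm{free}}\,\mathrm{d}\lambda$. The free Euler components of $L-\mu\cdot g$, paired with $\varphi_{\mathrm{free}}$, vanish on the common kernel of these functionals, so they lie in $\mathrm{span}\{\psi_i\}$. By Green's formula, each $\psi_i$ is the free Euler component of $\hat\mu^{(i)}\cdot g$, where $\hat\mu^{(i)}$ solves the homogeneous adjoint system with suitable terminal data. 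Hence $\mu+\sum_ic_i\hat\mu^{(i)}$ is the required multiplier.

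Your claim that the adjoint system has leading matrix $\partial g/\partial v$ also needs the $v^i$ to be top-order derivatives of $m$ distinct components. Under the paper's hypothesis the $v^i$ may be any jet coordinates (e.g.\ $v^1=u^1$ while $g$ contains $u^1_\lambda$). The coefficient of the highest derivative of $\mu$ in the $\alpha$-th Euler component is then $\mathcal{F}\,\partial g/\partial u^\alpha_K$ with $|K|$ maximal, which need not be invertible. So the standard linear ODE existence theory you cite does not apply as stated.
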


\begin{proof}
Let $\phi$ be a smooth vector-valued function with compact support in $\mathcal{I}$, so that $\widetilde{u}=u+\varepsilon\phi$ also lies in the same function space as $u$. Since $u$ extremises \eqref{varTheorem}, as determined before, the first variation of $A(\lambda)$, $\mathcal{A}(\varlambda)$, is given by
$$\mathcal{A}(\varlambda)=\int^\varlambda_0 \exp\left(\int_{\lambda}^\varlambda\frac{\partial L}{\partial A}\,\mathrm{d}\theta\right)\left(\sum_{\alpha=1}^q\sum_K\frac{\partial L}{\partial u^\alpha_K}\phi^\alpha_K\right)\,\mathrm{d}\lambda.$$

Let $C=\left\{\left(\lambda,u^{(n)}\right): g\left(\lambda,u^{(n)}\right)=0\right\}$ be the constrained manifold. The admissible variations $\widetilde{u}$ must belong to $C$. Hence, differentiating the perturbed constraints \eqref{constraints} with respect to $\varepsilon$ and evaluating at $\varepsilon=0$ yields
\begin{equation}\label{linearized_constraint}
\sum_{\alpha=1}^q\sum_K\frac{\partial g^i}{\partial u^\alpha_K}\phi^\alpha_K=0, \quad i=1,\dots m.
\end{equation}

Let $\hat{\mathrm{d}}$ be the vertical differential, as defined in \textcite{Olver1993}. The vertical differential of $g^i$ is given by
$$\hat{\mathrm{d}}g^i=\sum_{\alpha=1}^q\sum_K\frac{\partial g^i}{\partial u^\alpha_K}\,\mathrm{d}u^\alpha_K, \quad \textrm{for} \quad i=1,\dots,m.$$
These vertical 1-forms can be viewed as linear maps from the space of vertical vector fields to the space of differential functions, i.e. $\hat{\mathrm{d}}g^i:V(TJ^n)\rightarrow \mathcal{P}$, such that
$$v=\sum_{\alpha=1}^q\sum_K\phi^\alpha_K\frac{\partial}{\partial u^\alpha_K}\mapsto \hat{\mathrm{d}}g^i(v)=\sum_{\alpha=1}^q\sum_K\frac{\partial g^i}{\partial u^\alpha_K}\phi^\alpha_K.$$

Thus, the linearised constraints \eqref{linearized_constraint} can be written as $\hat{\mathrm{d}}g^i(v)=0$ for $i=1,\dots,m$. These vertical 1-forms define both the vertical tangent space to $C$,
$$V(TC)=\left\{\left.v\in V(TJ^n)\right|\hat{\mathrm{d}}g^i(v)=0,\; \forall i\right\}$$
and the normal vertical cotangent subspace to $C$,
$$N^\ast_VC=\left<\mathrm{d}g^i|i=1,\dots,m\right>.$$

Since $\mathcal{A}(\varlambda)$ can be written as 
$\int_0^\varlambda\mathcal{F}(\lambda)\,\hat{\mathrm{d}}L(v)\,\mathrm{d}\lambda\,\displaystyle{=0}$, 
and we have assumed the regularity assumption, this means that $\hat{\mathrm{d}}L$ can be written as a linear combination of the $\hat{\mathrm{d}}g^i$, for $i=1,\dots,m$. Hence, 
$$\int_0^\varlambda\mathcal{F}(\lambda)\,\hat{\mathrm{d}}L(v)\,\mathrm{d}\lambda=\int_0^\varlambda\mathcal{F}(\lambda)\,\sum_{i=1}^m\mu^i(\lambda)\hat{\mathrm{d}}g^i(v)\,\mathrm{d}\lambda,$$
which simplifies to 
$$\int_0^\varlambda\mathcal{F}(\lambda)\hat{\mathrm{d}}\left(L-\sum_{i=1}^m\mu^i(\lambda)g^i\right)(v)\,\mathrm{d}\lambda =0.$$
\hfill $\Box$
\end{proof}

We are now ready to derive the invariantised version of the generalised Euler-Lagrange equations for one-dimensional Herglotz variational problem of order $n$.

\section{Invariantised Generalised Variational Problem}\label{IGVP}

Suppose the generalised variational problem \eqref{varProblem1} is invariant under a prolonged group action, thus we can rewrite it in terms of the generating differential invariants of that group action $I^\alpha_K$, which for simplicity we will refer to as $\kappa=(\kappa_1,\dots,\kappa_N)$, and their derivatives, $\mathcal{D}_J\,\kappa$. Since the Lagrangian in the generalised variational problem depends on derivatives of order at most $n$, the Lagrangian in terms of the generating invariants will be of order less than or equal to $n$ --- this will depend on the choice of normalisation equations. Thus, we have assumed that the invariantised Lagrangian is of order\footnote{The variational problem might not depend on all of the generating differential invariants of the group action. Here we present the most general case.} $n$.

Thus, rewriting \eqref{varProblem1} in terms of the generating differential invariants of that group action (which can easily be achieved using the Replacement Theorem) yields
\begin{equation}\label{invariantisedProb}
\mathcal{D}_\lambda A=\mathcal{L}(\lambda,\kappa,\mathcal{D}_J\kappa,A).
\end{equation}
Note that even though $\mathcal{D}_J=D_J$, we adopt $\mathcal{D}_J$ to clearly show that the problem has been invariantised.

Since 
$$\left.\frac{\mathrm{d}}{\mathrm{d}\varepsilon}A[\widetilde{u}]\right|_{\varepsilon=0} \quad \textrm{and} \quad \left.\frac{\mathrm{d}}{\mathrm{d}\nu}A[u]\right|_{u_\nu(\lambda,0)=\phi},$$
yield the same symbolic result, let us introduce a new dummy variable $\nu$ (which is invariant under the group action) to effect the variation. The introduction of this new independent variable, $\nu$, leads to $q$ new invariants, namely $I^\alpha_\nu=\left.g\cdot u^\alpha_\nu\right|_{g=\rho(\zede)}$, for $\alpha=1,\dots,q$ and $N$ syzygies
\begin{equation}\label{matrixSyzygies}
\mathcal{D}_\nu\begin{pmatrix}
\kappa_1\\
\vdots\\
\kappa_N
\end{pmatrix}=\mathcal{H}\begin{pmatrix}
I^1_\nu\\
\vdots\\
I^q_\nu
\end{pmatrix},
\end{equation}
written in matrix form, where $\mathcal{H}$ is a $N\times q$ matrix of operators that depend on $\mathcal{D}_\lambda^k$, for $k\in \mathbb{N}$, $\kappa_j$, for $j=1,\dots,N$, and their derivatives.

As $u^\alpha_\nu(\lambda,0)=\phi^\alpha$, for $\alpha=1,\dots,q$, the $I^\alpha_\nu$ will play the same role in the derivation of the invariantised version of the generalised Euler-Lagrange equations and their conserved quantities, as $\phi^\alpha$ did in the derivation of the generalised Euler-Lagrange equations. So proceeding as for the derivation of the generalised Euler-Lagrange equations, i.e. differentiating \eqref{invariantisedProb} with respect to $\nu$, we obtain
\begin{align}\nonumber
&\mathcal{D}_\nu\mathcal{D}_\lambda A = \mathcal{D}_\nu\,\mathcal{L}(\lambda,\kappa,\mathcal{D}_J\kappa,A) \qquad\Longleftrightarrow\\\label{IntFactorInv}
&\mathcal{D}_\lambda\mathcal{D}_\nu A = \sum_{j=1}^N\sum_{k=0}^n\frac{\partial \mathcal{L}}{\partial \mathcal{D}_\lambda^k\kappa_j}\mathcal{D}_\lambda^k\mathcal{D}_\nu\kappa_j+\frac{\partial \mathcal{L}}{\partial A}\mathcal{D}_\nu A,
\end{align}
since $[\mathcal{D}_\lambda,\mathcal{D}\nu]=0$. Now, set the term $\mathcal{D}_\nu A$ as $\mathcal{A}=\mathcal{A}(\lambda)$.

As before, we multiply \eqref{IntFactorInv} by an integrating factor $F(\lambda)$, such that
$$F\cdot\left(-\frac{\partial \mathcal{L}}{\partial A}\right)=\mathcal{D}_\lambda F,$$
resulting in the solution
\begin{equation}\label{solInv}
F(\varlambda)\mathcal{A}(\varlambda)-F(0)\mathcal{A}(0)=\int^\varlambda_0 F\left(\lambda\right)\sum_{j=1}^N\sum_{k=0}^n\frac{\partial \mathcal{L}}{\partial \mathcal{D}_{\lambda}^k\kappa_j}\mathcal{D}_{\lambda}^k\mathcal{D}_\nu\kappa_j\,\mathrm{d}\lambda,
\end{equation}
where 
$$F(\lambda)=F(0)\exp\left(-\int^\lambda_0\frac{\partial \mathcal{L}}{\partial A}\,\mathrm{d}\theta\right).$$

Since we are looking for solutions that leave $A(\lambda)$ stationary, this implies that the left-hand side of \eqref{solInv} must be zero, i.e.
\begin{equation}\label{intparts1}
\int^\varlambda_0 \mathcal{F}\left(\lambda\right)\sum_{j=1}^N\sum_{k=0}^n\frac{\partial \mathcal{L}}{\partial \mathcal{D}^k_\lambda\kappa_j}\mathcal{D}_{\lambda}^k\mathcal{D}_\nu\kappa_j\,\mathrm{d}\lambda=0
\end{equation}
where, as previously, we have normalised the integrating factor, i.e.
\begin{equation}\label{eqIntFac2}
\mathcal{F}(\lambda)=\frac{F(\lambda)}{F(0)}=\exp\left(-\int^\lambda_0\frac{\partial \mathcal{L}}{\partial A}\,\mathrm{d}\theta\right).
\end{equation}

Performing a first set of integration by parts in \eqref{intparts1}, we obtain
\begin{equation}\label{integralBefSys}
\begin{array}{l}
\displaystyle{\int^\varlambda_0\sum_{j=1}^N\sum_{k=0}^n(-1)^k\mathcal{D}_\lambda^k\left(\mathcal{F}(\lambda)\frac{\partial \mathcal{L}}{\partial \mathcal{D}_{\lambda}^k\kappa_j}\right)\mathcal{D}_\nu\kappa_j\,\mathrm{d}{\scriptstyle{T}}}\\[10pt]
\quad \displaystyle{+\left.\left(\sum_{j=1}^N\sum_{k=1}^n\sum_{\ell=1}^k(-1)^{\ell-1}\mathcal{D}_{\lambda}^{\ell-1}\left(\mathcal{F}(\lambda)\frac{\partial \mathcal{L}}{\partial \mathcal{D}_{\lambda}^k\kappa_j}\right)\mathcal{D}_{\lambda}^{k-\ell}\mathcal{D}_\nu\kappa_j\right)\right|_{\lambda=0}^{\lambda=\varlambda}}=0.
\end{array}
\end{equation}

Substituting each $\mathcal{D}_\nu\kappa_j$ in the integral part of \eqref{integralBefSys} by the respective syzygies in \eqref{matrixSyzygies}, we obtain
\begin{equation}\label{integralBefInt}
\begin{array}{l}
\displaystyle{\int^\varlambda_0\sum_{j=1}^N\mathsf{E}_{\mathcal{F}}^j(\mathcal{L})\sum_{\alpha=1}^q(\mathcal{H})_{j,\alpha}I^\alpha_\nu\,\mathrm{d}\lambda}\\[10pt]
\quad \displaystyle{+\left.\left(\sum_{j=1}^N\sum_{k=1}^n\sum_{\ell=1}^k(-1)^{\ell-1}\mathcal{D}_{\lambda}^{\ell-1}\left(\mathcal{F}(\lambda)\frac{\partial \mathcal{L}}{\partial \mathcal{D}_{\lambda}^k\kappa_j}\right)\mathcal{D}_\lambda^{k-\ell}\mathcal{D}_\nu\kappa_j\right)\right|_{\lambda=0}^{\lambda=\varlambda}}=0.
\end{array}
\end{equation}
where $\mathsf{E}_{\mathcal{F}}^j$ is the modified Euler operator with respect to $\kappa_j$. As mentioned earlier, the $I^\alpha_\nu$ play the same role as $\phi^\alpha$, for $\alpha=1,\dots,q$. Thus, to obtain the invariantised version of the generalised Euler-Lagrange equations, we need to isolate the $I^\alpha_\nu$ in the integral part of \eqref{integralBefInt} and for that we perform a second set of integration by parts, which yields
\begin{equation}\label{intparts2}
\begin{array}{l}
\displaystyle{\int^\varlambda_0\sum_{\alpha=1}^q\sum_{j=1}^N(\mathcal{H})^\ast_{j,\alpha}\left(\mathsf{E}_{\mathcal{F}}^j(\mathcal{L})\right)I^\alpha_\nu\,\mathrm{d}\lambda}\\[10pt]
\quad \displaystyle{+\left.\left(\sum_{j=1}^N\sum_{k=1}^n\sum_{\ell=1}^k(-1)^{\ell-1}\mathcal{D}_{\lambda}^{\ell-1}\left(\mathcal{F}(\lambda)\frac{\partial \mathcal{L}}{\partial \mathcal{D}_{\lambda}^k\kappa_j}\right)\mathcal{D}_{\lambda}^{k-\ell}\mathcal{D}_\nu\kappa_j\right)\right|_{\lambda=0}^{\lambda=\varlambda}}+\,\textrm{B.T.'s}=0.
\end{array}
\end{equation}
where $(\mathcal{H})^\ast_{j\alpha}$ is the adjoint operator of $(\mathcal{H})_{j,\alpha}$ and B. T.'s are boundary terms that we have picked up from this second set of integration by parts. 

Hence, from \eqref{intparts2}, we can read off the invariantised version of the generalised Euler-Lagrange equations, namely
$$\mathsf{E}^\alpha_{\mathcal{F}}(L)=\sum_{j=1}^N(\mathcal{H})^\ast_{j,\alpha}\left(\mathsf{E}_{\mathcal{F}}^j(\mathcal{L})\right)=0, \quad \alpha=1,\dots,q,$$
where $\mathsf{E}^\alpha_{\mathcal{F}}$ is the modified Euler operator with respect to $u^\alpha$ and $\mathsf{E}^j_{\mathcal{F}}$ is the modified Euler operator with respect to $\kappa_j$. This completes the proof of the following theorem.

\begin{theorem}
Consider the generalised variational problem
$$\frac{\mathrm{d}A}{\mathrm{d}\lambda}=L(\lambda,u^{(n)},A)$$
which is invariant under the prolonged group action of $G$ on the $n^{\textrm{th}}$ jet space, $J^n$. Let $\kappa=(\kappa_1,\dots,\kappa_N)$ be the generating differential invariants of that group action and $\mathcal{D}_J\kappa$ their derivatives with respect to $\lambda$. Furthermore, let the independent variable $\lambda$ be invariant under that group action. Then the invariantised version of the generalised Euler-Lagrange equations are
\begin{equation}\label{InvELeqns}
\mathsf{E}^\alpha_{\mathcal{F}}(L)=\sum_{j=1}^N(\mathcal{H})^\ast_{j,\alpha}\left(\mathsf{E}_{\mathcal{F}}^j(\mathcal{L})\right)=0, \quad \alpha=1,\dots,q,
\end{equation}
where $\mathsf{E}^\alpha_{\mathcal{F}}$ is the modified Euler operator with respect to $u^\alpha$, $\mathsf{E}^j_{\mathcal{F}}$ is the modified Euler operator with respect to $\kappa_j$ and $(\mathcal{H})^\ast_{j,\alpha}$ is the adjoint operator of $(\mathcal{H})_{j,\alpha}$ --- $\mathcal{H}$ is the matrix of operators in \eqref{matrixSyzygies}.
\end{theorem}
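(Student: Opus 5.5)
The plan is to reproduce, in the invariant setting, the symmetry-method derivation of the generalised Euler--Lagrange equations, with the invariants $I^\alpha_\nu$ playing the role of the variations $\phi^\alpha$.

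First, I will rewrite the problem as $\mathcal{D}_\lambda A=\mathcal{L}(\lambda,\kappa,\mathcal{D}_J\kappa,A)$ using the Replacement Theorem. I will then introduce the invariant dummy variable $\nu$ with $u^\alpha_\nu(\lambda,0)=\phi^\alpha$. This yields the new invariants $I^\alpha_\nu$ and the syzygies \eqref{matrixSyzygies}, $\mathcal{D}_\nu\kappa=\mathcal{H}\,(I^1_\nu,\dots,I^q_\nu)^T$. These syzygies come from the general relation \eqref{syzygies}, applied to the two differentiation paths $\lambda\cdots\lambda\nu$ and $\nu\lambda\cdots\lambda$.

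Differentiating the invariantised problem with respect to $\nu$, and using $[\mathcal{D}_\lambda,\mathcal{D}_\nu]=0$ (both variables are invariant), gives the linear ODE \eqref{IntFactorInv} for $\mathcal{A}=\mathcal{D}_\nu A$. I will multiply it by the integrating factor $\mathcal{F}$ of \eqref{eqIntFac2}, which is legitimate because $\mathcal{L}$ is affine in $A$, so $\partial\mathcal{L}/\partial A$ depends only on $\lambda$ along the extremal. Integrating from $0$ to $\varlambda$ then gives \eqref{solInv}. Stationarity and the prescribed value $A(0)$ force $\mathcal{A}(0)=0$ and $\mathcal{A}(\varlambda)=0$, and hence \eqref{intparts1}.

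Next come two rounds of integration by parts in $\lambda$:
\begin{itemize}
\item The first round moves $\mathcal{D}_\lambda^k$ off $\mathcal{D}_\nu\kappa_j$. This produces $\mathsf{E}^j_{\mathcal{F}}(\mathcal{L})\,\mathcal{D}_\nu\kappa_j$ plus the boundary terms in \eqref{integralBefSys}.
\item I then substitute the syzygies for $\mathcal{D}_\nu\kappa_j$.
\item The second round moves the operators $(\mathcal{H})_{j,\alpha}$, which are polynomial in $\mathcal{D}_\lambda$ with coefficients depending on $\kappa$ and its derivatives, onto $\mathsf{E}^j_{\mathcal{F}}(\mathcal{L})$. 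This produces the formal adjoints $(\mathcal{H})^\ast_{j,\alpha}$ and further boundary terms.
\end{itemize}

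The final step is the main obstacle. One must justify that the integrand coefficients of $I^\alpha_\nu$ must vanish, and that this coefficient really is $\mathsf{E}^\alpha_{\mathcal{F}}(L)$. For the vanishing, I would argue that $I^\alpha_\nu=\sum_\beta \rho$-dependent invertible combinations of $u^\beta_\nu$. For instance, $I_\nu=\mathcal{R}(\zede)\,u_\nu$ with $\mathcal{R}$ the Jacobian of $u\mapsto \widetilde{u}$ evaluated at the frame, which is invertible. Hence, as $\phi$ ranges over compactly supported functions, the $I^\alpha_\nu$ range over arbitrary compactly supported functions, the boundary terms vanish, and the fundamental lemma gives \eqref{InvELeqns}.

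To identify the result with the weighted Euler operator of the original Lagrangian, I would compare with the coordinate computation of the previous section. Since $A$, $\mathcal{F}$ and the first variation are the same quantity expressed in two sets of variables, $\sum_\alpha \mathsf{E}^\alpha_{\mathcal{F}}(L)\,\phi^\alpha$ and $\sum_\alpha\big(\sum_j \mathcal{H}^\ast_{j,\alpha}\mathsf{E}^j_{\mathcal{F}}(\mathcal{L})\big)I^\alpha_\nu$ agree modulo total derivatives. With the invertible linear relation between $I_\nu$ and $\phi$, the two systems of equations are therefore equivalent, and this is the sense of the equality in \eqref{InvELeqns}.
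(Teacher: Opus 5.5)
Your proposal is correct and follows the same route as the paper. You invariantise $L$, vary through the invariant dummy variable $\nu$, and solve the linear equation for $\mathcal{A}=\mathcal{D}_\nu A$ with the integrating factor $\mathcal{F}$. You then integrate by parts, substitute the syzygies $\mathcal{D}_\nu\kappa=\mathcal{H}I_\nu$, integrate by parts again, and read off the coefficients of $I^\alpha_\nu$.

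You also add a step the paper leaves implicit when it ``reads off'' the equations. You note that $I_\nu=R\,u_\nu$, with $R$ the invertible Jacobian of $u\mapsto\widetilde{u}$ evaluated at the frame, so the $I^\alpha_\nu$ are arbitrary. Hence the stated identity holds as equivalence of the two systems up to this invertible matrix, not as literal equality.
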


These invariantised generalised Euler-Lagrange equations reduce, in the classical one-dimen-sional case, to the invariantised Euler-Lagrange equations presented in \textcite{GoncalvesMansfield2011}. Before we proceed to demonstrate the structure of the generalised conserved quantities, let us now determine the invariantised generalised Euler-Lagrange equations from our motivational example.

\begin{example}
To determine the invariantised generalised Euler-Lagrange equations of the Herglotz problem \eqref{dampedVar} we must first rewrite it in terms of the generating invariants of the group action. For that we make use of the Replacement Theorem. Recall from Example \ref{exFrame} that the normalisation equations are: $\widetilde{t}=0$, $\widetilde{x}=0$ and $\widetilde{x_\lambda}=0$. Thus, $I^t=0$, $I^x=0$ and $I^x_1=0$ and the generating differential invariants are $I^t_1$ and $I^x_{11}$. Furthermore, remember that $I^t_1$ is the Minkowski norm, $\eta$, and instead of using $I^x_{11}$, we will use the Minkowski curvature, $\kappa$, which is equal to $I^x_{11}/(I^t_1)^2$.  Hence \eqref{dampedVar} becomes
$$\mathcal{D}_\lambda A=\frac{1}{2}\eta^2-\alpha A.$$

Using the formula for the invariantised generalised Euler-Lagrange equations \eqref{InvELeqns}, we realise we need to compute $\mathsf{E}^\eta_{\mathcal{F}}(\mathcal{L})$, $\mathsf{E}^\kappa_{\mathcal{F}}(\mathcal{L})$ and the adjoint matrix of operators\footnote{When calculating the adjoint matrix of operators, it is crucial to keep track of the boundary terms, as these will be needed to determine the generalised conserved quantities.} $\mathcal{H}$. Since $\mathcal{L}$ does not depend on $\kappa$, $\mathsf{E}^\kappa(\mathcal{L})=0$, which will imply that we do not need to bother calculating the second column of $\mathcal{H}^\ast$. 

Hence, the system of invariantised generalised Euler-Lagrange equations is
$$\left\{\begin{array}{l}
\mathsf{E}^t(L)=-\mathcal{D}_\lambda(\mathsf{E}^\eta_{\mathcal{F}}(\mathcal{L}))=-\mathcal{D}_\lambda\left(\mathrm{e}^{\alpha\lambda}\eta\right)=0,\\[10pt]
\mathsf{E}^x(L)=\kappa\eta\,\mathsf{E}^\eta_{\mathcal{F}}(\mathcal{L})=\mathrm{e}^{\alpha\lambda}\kappa\eta^2 =0.
\end{array}\right.$$
\end{example}

Note that all boundary terms in \eqref{intparts2}, obtained from the two sets of integration by parts performed earlier, produce terms that are linear combinations of the $I^\alpha_{K\nu}$, which can be written as
$$\left.\left(\mathcal{F}(\lambda)\sum_{\alpha=1}^q\sum_K C^\alpha_K I^\alpha_{K\nu}\right)\right|_{\lambda=0}^{\lambda=\varlambda},$$
where $C^\alpha_K$ are the coefficients of the $I^\alpha_{K\nu}$. To justify the appearance of $\mathcal{F}$ as a factor, we need the result in the following lemma.

\begin{lemma}\label{FRelation}
Let $\mathcal{F}(\lambda)$ be defined as in \eqref{eqIntFac2}. Furthermore, suppose that $G_\ell(I)$, with $\ell\in \mathbb{N}$, are functions in the cross section $\mathcal{K}$. Then 
$$\mathcal{D}_\lambda^\ell(\mathcal{F}(\lambda))=\mathcal{F}(\lambda)G_\ell(I),$$
for all $\ell \in \mathbb{N}$.
\end{lemma}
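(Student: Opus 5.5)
The natural route is induction on $\ell$, using only the defining property of the integrating factor. From \eqref{eqIntFac2}, $\mathcal{F}(\lambda)=\exp\bigl(-\int_0^\lambda \frac{\partial\mathcal{L}}{\partial A}\,\mathrm{d}\theta\bigr)$. The first step is to note that $\frac{\partial \mathcal{L}}{\partial A}$ is itself a function on the cross section $\mathcal{K}$. Indeed, $\mathcal{L}=\mathcal{L}(\lambda,\kappa,\mathcal{D}_J\kappa,A)$ is written entirely in terms of $\lambda$ (invariant), the normalised invariants and $A$ (invariant, $\widetilde{A}=A$). Moreover, since Herglotz Lagrangians are assumed affine in $A$, $\frac{\partial\mathcal{L}}{\partial A}$ does not depend on $A$ at all. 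It is therefore a function of the invariantised jet coordinates $I$ (and $\lambda$), i.e. a function on $\mathcal{K}$. Call it $G_1(I):=-\frac{\partial\mathcal{L}}{\partial A}$.

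Base case $\ell=1$. Differentiating the exponential gives
\[
\mathcal{D}_\lambda\mathcal{F}=\mathcal{F}\cdot\Bigl(-\frac{\partial\mathcal{L}}{\partial A}\Bigr)=\mathcal{F}\,G_1(I).
\]
This is exactly the integrating-factor relation $F\cdot(-\partial\mathcal{L}/\partial A)=\mathcal{D}_\lambda F$, which survives normalisation by $F(0)$. The case $\ell=0$ is trivial with $G_0=1$.

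Inductive step. Assume $\mathcal{D}_\lambda^\ell\mathcal{F}=\mathcal{F}\,G_\ell(I)$ with $G_\ell$ a function on $\mathcal{K}$. Then
\[
\mathcal{D}_\lambda^{\ell+1}\mathcal{F}=\mathcal{D}_\lambda(\mathcal{F}G_\ell)=\mathcal{F}\bigl(G_1G_\ell+\mathcal{D}_\lambda G_\ell\bigr),
\]
so we set $G_{\ell+1}:=G_1G_\ell+\mathcal{D}_\lambda G_\ell$. This gives a recursion of Riccati/Bell-polynomial type, $G_{\ell+1}=(\mathcal{D}_\lambda+G_1)G_\ell$, that is, $G_\ell=(\mathcal{D}_\lambda+G_1)^\ell 1$.

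The main obstacle is justifying that $\mathcal{D}_\lambda G_\ell$ is again a function on the cross section. Here invariant differentiation does not commute with invariantisation, but only up to the error terms: $\mathcal{D}_\lambda I^\alpha_K=I^\alpha_{K1}+M^\alpha_{K1}$. The correction terms $M^\alpha_{K1}$ are themselves functions of the normalised invariants, as they are built from the normalisation equations and the prolonged infinitesimals restricted to $\mathcal{K}$. Since $\mathcal{D}_\lambda=D_\lambda$ here ($\lambda$ invariant), the chain rule applied to $G_\ell(I)$ then produces only invariants. Alternatively, one can invoke the Replacement Theorem: $\mathcal{D}_\lambda$ maps invariants to invariants, and any invariant equals its invariantisation, so it is a function of the $I$'s.

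One caveat needs care: an explicit dependence on $\lambda$ in $\mathcal{L}$. Because $\lambda$ is invariant, $I(\lambda)=\lambda$, so such dependence is allowed within ``functions on $\mathcal{K}$''. I would state this convention explicitly and then conclude by induction.
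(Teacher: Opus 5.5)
Your proposal is correct and follows essentially the same induction as the paper: the base case is the integrating-factor relation $\mathcal{D}_\lambda\mathcal{F}=\mathcal{F}\,(-\partial\mathcal{L}/\partial A)$, and the inductive step sets $G_{\ell+1}=-\frac{\partial\mathcal{L}}{\partial A}G_\ell+\mathcal{D}_\lambda G_\ell$. Your extra justifications make explicit two points the paper only asserts: that $\partial\mathcal{L}/\partial A$ is independent of $A$ by the affine assumption, and that $\mathcal{D}_\lambda$ sends functions of the $I$'s to functions of the $I$'s, either via the error terms or via the Replacement Theorem.
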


\begin{proof}
We shall prove this by induction. 

\vspace{0.2cm}
\noindent \textit{Induction base}\\
\indent For $\ell=1$ 
$$\mathcal{D}_\lambda(\mathcal{F}(\lambda))=\mathcal{D}_\lambda\left(\frac{F(\lambda)}{F(0)}\right)=\frac{1}{F(0)}\mathcal{D}_\lambda(F(\lambda))=-\frac{F(\lambda)}{F(0)}\frac{\partial\mathcal{L}}{\partial A}=\mathcal{F}(\lambda)\left(-\frac{\partial\mathcal{L}}{\partial A}\right),$$
where $-\partial\mathcal{L}/\partial A$ is a function of the invariantised jet coordinates, which we shall denote as $G_1(I)$.

\vspace{0.2cm}
\noindent \textit{Induction step}\\
\indent Assume $\mathcal{D}_\lambda^\ell(\mathcal{F}(\lambda))=\mathcal{F}(\lambda)G_\ell(I)$. Then
$$\begin{array}{l}
\mathcal{D}_\lambda^{\ell+1}(\mathcal{F}(\lambda))=\mathcal{D}_\lambda\left(\mathcal{D}_\lambda^\ell(\mathcal{F}(\lambda))\right)=\mathcal{D}_\lambda\left(\mathcal{F}(\lambda)G_\ell(I)\right)=\mathcal{D}_\lambda(\mathcal{F}(\lambda))G_\ell(I)+\mathcal{F}(\lambda)\mathcal{D}_\lambda(G_\ell(I))\\[10pt]
\displaystyle{\quad =-\mathcal{F}(\lambda)\frac{\partial \mathcal{L}}{\partial A}G_\ell(I)+\mathcal{F}(\lambda)\mathcal{D}_\lambda(G_\ell(I))=\mathcal{F}(\lambda)\left(-\frac{\partial \mathcal{L}}{\partial A}G_\ell(I)+\mathcal{D}_\lambda(G_\ell(I))\right).}
\end{array}$$
Since 
$$-\frac{\partial \mathcal{L}}{\partial A}G_\ell(I)+\mathcal{D}_\lambda(G_\ell(I))$$
is a function of the invariantised jet coordinates, without loss of generality we can name it $G_{\ell+1}(I)$. 

\vspace{0.2cm}
Thus, we have proved by induction that $\mathcal{D}_\lambda^\ell(\mathcal{F}(\lambda))=\mathcal{F}(\lambda)G_\ell(I)$. \hfill $\Box$
\end{proof}

As all terms in the boundary terms involve either $\mathcal{F}(\lambda)$ or $\mathcal{D}_\lambda^\ell(\mathcal{F}(\lambda))$, then it is easy to show, using Lemma \ref{FRelation}, that $\mathcal{F}$ factors out. We are now ready to show the structure behind the generalised conserved quantities arising from the Noether-type theorem for one-dimensional generalised variational problems of order $n$, as stated and proved in \textcite{SantosMartinsTorres2015}.

\begin{theorem}
Consider the generalised variational problem
\begin{equation}\label{FunctTheorem}
\frac{\mathrm{d}A}{\mathrm{d}\lambda}=L(\lambda,u^{(n)},A),
\end{equation}
where the Lagrangian is a smooth function of the independent variable, $\lambda$, the dependent variables and their derivatives up to order $n$, $u^{(n)}$, and the functional itself, $A$. Let \eqref{FunctTheorem} be invariant under the prolonged group action of $G$ on $J^n$, which leaves $\lambda$ and $A$ invariant. Consider $\kappa=(\kappa_1,\dots,\kappa_N)$ to be the generating differential invariants and $\mathcal{D}_J\kappa$ their derivatives. Thus, \eqref{FunctTheorem} can be written as
\begin{equation}\label{FunctTheoremInv}
\mathcal{D}_\lambda A=\mathcal{L}(\lambda,\kappa,\mathcal{D}_J\kappa,A).
\end{equation}

Introducing a dummy independent variable, $\nu$ --- which is also invariant under the group action --- to effect the variation, differentiating \eqref{FunctTheoremInv} with respect to $\nu$, setting $\mathcal{D}_\nu A$ as $\mathcal{A}$, solving for $\mathcal{A}$ and then integrating by parts yields
$$\int_0^\varlambda\left[\sum_{\alpha=1}^q\mathsf{E}^\alpha_{\mathcal{F}}(L)I^\alpha_\nu+\mathcal{D}_{\lambda}\left(\sum_{\alpha=1}^q\sum_K\mathcal{F}(\lambda)I^\alpha_{K\nu}C^\alpha_K\right)\right]\,\mathrm{d}\lambda=0,$$
where this defines $\mathcal{C}=(C^\alpha_K)$ and 
$$\mathsf{E}^\alpha_{\mathcal{F}}(L)=\sum_{j=1}^N\left(\mathcal{H}\right)^\ast_{j,\alpha}\mathsf{E}^j_{\mathcal{F}}(\mathcal{L}).$$ 

Let $(\varepsilon_1,\dots,\varepsilon_r)$ be the coordinates of $G$ near the identity, $e$, and $\mathbf{w}_i$, for $i=1,\dots,r$, the associated infinitesimal generators. Furthermore, let $\mathcal{A}d(g)$ be the Adjoint representation of $G$ with respect to those infinitesimal vector fields. Set
$$\Omega(\zede)=(\omega^j_i),$$
as the matrix of infinitesimals, where
$$\omega^j_i=\left.\frac{\partial \zede_j}{\partial \varepsilon_i}\right|_{g=e},$$
are the infinitesimals of the prolonged group action. Let $\Omega(I)$ be the invariantised version of the above matrix. Then, the $r$ generalised conserved quantities, obtained via the Noether-type theorem, can be written as
\begin{equation}\label{structureCQ}
\mathcal{F}(\varlambda)\mathcal{A}d(\rho)^{-1}\boldsymbol{\upsilon}=\mathbf{c},
\end{equation}
where $\mathbf{c}$ is a constant vector and
\begin{equation}
\boldsymbol{\upsilon}=\Omega(I)\mathcal{C}
\end{equation}
is a vector of invariants.
\end{theorem}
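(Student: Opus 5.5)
We follow the strategy of \textcite{GoncalvesMansfield2011} for the classical case, with the integrating factor carried through via Lemma \ref{FRelation}. The idea is to compare the boundary terms of the invariant calculation with those of the coordinate calculation of Section 3, which are already known to give the Noether-type conserved quantities. Both calculations compute the same first variation. In the invariant version we take the variation along each infinitesimal generator $\mathbf{w}_i$, i.e.\ $u^\alpha_\nu=\phi^\alpha_{,i}$. On solutions of $\mathsf{E}^\alpha_{\mathcal{F}}(L)=0$ the integrand reduces to a total derivative, so
$$\mathcal{F}(\lambda)\sum_{\alpha}\sum_K C^\alpha_K I^\alpha_{K\nu}$$
is constant in $\lambda$. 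The main task is to rewrite $I^\alpha_{K\nu}$ for $u_\nu$ given by a group generator in terms of the frame and the invariantised infinitesimals.

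\textbf{Step 1.} Since $u_\nu=\phi_{,i}$ is a symmetry direction, the $\nu$-derivatives $u^\alpha_{K\nu}$ are the prolonged infinitesimals $\omega^{(\alpha,K)}_i(\zede)$. Invariantisation gives $I^\alpha_{K\nu}=\widetilde{u^\alpha_{K\nu}}|_{g=\rho}$. Because $\nu$ is invariant and $\lambda$ is fixed by the action, $\widetilde{u^\alpha_{K\nu}}=\partial_\nu \widetilde{u^\alpha_K}$. By the chain rule this is the row of the Jacobian $\partial\widetilde{\zede}/\partial\zede$ applied to $\boldsymbol{\omega}_i(\zede)$.

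\textbf{Step 2.} Apply the key identity \eqref{essentialPiece}, transposed:
$$\frac{\partial\widetilde{\zede}}{\partial\zede}\,\Omega(\zede)^T=\Omega(\widetilde{\zede})^T\,\mathcal{A}d(g)^{-T}.$$
Evaluating at $g=\rho(\zede)$ gives, for the vector of all $I^\alpha_{K\nu}$ indexed over the $r$ generators,
$$\big(I^\alpha_{K\nu}\big)_{i}=\big(\mathcal{A}d(\rho)^{-1}\Omega(I)\big)_{i,(\alpha,K)},$$
since $\Omega(\widetilde{\zede})|_{g=\rho}=\Omega(I)$.

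\textbf{Step 3.} Contracting with $\mathcal{C}$, the $r$ boundary expressions become the components of $\mathcal{F}(\lambda)\mathcal{A}d(\rho)^{-1}\Omega(I)\mathcal{C}$. This expression takes the same value at $\lambda=\varlambda$ as at $\lambda=0$, where $\mathcal{F}(0)=1$. Calling that common value $\mathbf{c}$ gives \eqref{structureCQ} with $\boldsymbol{\upsilon}=\Omega(I)\mathcal{C}$. The entries of $\boldsymbol{\upsilon}$ are invariant, because $C^\alpha_K$ are functions of $\kappa$ and its derivatives: Lemma \ref{FRelation} ensures that after factoring out $\mathcal{F}$ the remaining coefficients lie on the cross section.

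\textbf{Main obstacle.} The delicate part is Step 1 together with identifying the $C^\alpha_K$. We must check that the boundary terms from both rounds of integration by parts (including the B.T.'s from computing $\mathcal{H}^\ast$) really are linear in the $I^\alpha_{K\nu}$, and not in $\mathcal{D}_\lambda$-derivatives of $I^\alpha_\nu$. Any $\mathcal{D}_\lambda I^\alpha_{K\nu}$ differs from $I^\alpha_{K1\nu}$ by error terms that are linear in lower $I_{\cdot\nu}$. The plan is to absorb these recursively using the error-term formula $\mathcal{D}_iI^\alpha_K=I^\alpha_{Ki}+M^\alpha_{Ki}$, and to use Lemma \ref{FRelation} to pull $\mathcal{F}$ out of every $\mathcal{D}^\ell_\lambda(\mathcal{F}\,\cdot)$.

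We also need the index conventions to line up, specifically rows versus columns and $\mathcal{A}d$ versus $\mathcal{A}d^{-1}$. This can be checked on the motivating example: there $\mathcal{A}d(\rho)^{-1}$ should reproduce the displayed matrix, whose first two columns are those of $\rho$.
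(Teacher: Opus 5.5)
Your proposal is correct and follows essentially the same route as the paper's proof. In both, the chain rule writes the row of $I^\alpha_{K\nu}$ as $\boldsymbol{\omega}_i(\zede)^T$ times the transposed Jacobian at $g=\rho(\zede)$; identity \eqref{essentialPiece} converts $\Omega(\zede)\,(\partial\widetilde{\zede}/\partial\zede)^T|_{g=\rho}$ into $\mathcal{A}d(\rho)^{-1}\Omega(I)$; and contracting with $\mathcal{C}$ gives the result. There is one small slip in your proposed sanity check: the matrix displayed in the motivating example shares only its top-left $2\times 2$ block with $\rho$, and its third row is $(b,a,1)$, built from the frame's translation parameters. So its first two columns are not those of $\rho$, although the matrix does equal $\mathcal{A}d(\rho)^{-1}$ as required.
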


\begin{proof}
We know that 
$$\left.\frac{\mathrm{d}}{\mathrm{d}\varepsilon}\right|_{\varepsilon=0}A(\lambda,\widetilde{u^{(n)}})) \quad \textrm{and} \quad \left.\frac{\mathrm{d}}{\mathrm{d}\nu}\right|_{u_\nu(\lambda,0)=\phi}A(\lambda,u^{(n)})$$
yield the same symbolic result. Thus, as shown previously in this section,
$$\mathcal{D}_\nu\mathcal{D}_\lambda A=\mathcal{D}_\nu \mathcal{L}(\lambda,\kappa,\mathcal{D}_J\kappa,A)$$
produces the boundary term
\begin{equation}\label{conservedQuant1}
\left.\left(\mathcal{F}(\lambda)\left(I^1_\nu \; I^1_{\lambda\nu} \dots I^1_{K_1\nu}\dots I^q_\nu \; I^q_{\lambda\nu} \dots I^q_{K_q\nu} \right)\mathcal{C}\right)\right|_{\lambda=0}^{\lambda=\varlambda}=0,
\end{equation}
where $K_i$, for $i=1,\dots,q$, are multi-indices of differentiation with respect to $\lambda$. By definition, $I^\alpha_{K\nu}$ is equal to
$$I^\alpha_{K\nu}=\widetilde{u^\alpha_{K\nu}}|_{g=\rho(\zede)}=\left.\frac{\mathrm{d}}{\mathrm{d}\nu}\widetilde{u^\alpha_K}\right|_{g=\rho(\zede)}.$$

Since by the chain rule
$$\frac{\mathrm{d}}{\mathrm{d}\nu}=\sum_{\alpha=1}^q\sum_Ku^\alpha_{K\nu}\frac{\partial}{\partial u^\alpha_K},$$
we have that 
\begin{align}\nonumber
&\left(I^1_\nu \; I^1_{\lambda\nu} \dots I^1_{K_1\nu}\dots I^q_\nu \; I^q_{\lambda\nu} \dots I^q_{K_q\nu} \right)\\\label{firstEqProof}
&\; =\left(u^1_\nu \; u^1_{\lambda\nu}\dots u^1_{K_1\nu} \dots u^q_\nu\; u^q_{\lambda\nu}\dots u^q_{K_q\nu}\right)\left.\left(\frac{\partial(\widetilde{u^1},\widetilde{u^1_\lambda},\dots,\widetilde{u^1_{K_1}},\dots,\widetilde{u^q},\widetilde{u^q_\lambda},\dots,\widetilde{u^q_{K_q}})}{\partial(u^1,u^1_\lambda,\dots,u^1_{K_1},\dots,u^q,u^q_\lambda,\dots,u^q_{K_q})}\right)^T\right|_{g=\rho(\zede)}.
\end{align}

For simplicity, let $\left(u^1 \; u^1_{\lambda}\dots u^1_{K_1} \dots u^q\; u^q_{\lambda}\dots u^q_{K_q}\right)$ be written as $\left(\zede_1\;\dots\;\zede_\ell\right)$, and hence \eqref{firstEqProof} can be simplified to
\begin{align}\nonumber
&\left(I^1_\nu \; I^1_{\lambda\nu} \dots I^1_{K_1\nu}\dots I^q_\nu \; I^q_{\lambda\nu} \dots I^q_{K_q\nu} \right)\\\label{secondEqProof}
&\qquad =\left(u^1_\nu \; u^1_{\lambda\nu}\dots u^1_{K_1\nu} \dots u^q_\nu\; u^q_{\lambda\nu}\dots u^q_{K_q\nu}\right)\left.\left(\frac{\partial\left(\widetilde{\zede_1},\dots,\widetilde{\zede_\ell}\right)}{\partial\left(\zede_1,\dots,\zede_\ell\right)}\right)^T\right|_{g=\rho(\zede)}.
\end{align}

Substituting the vector $\left(I^1_\nu \; I^1_{\lambda\nu} \dots I^1_{K_1\nu}\dots I^q_\nu \; I^q_{\lambda\nu} \dots I^q_{K_q\nu} \right)$ by \eqref{secondEqProof} in \eqref{conservedQuant1} yields
\begin{equation}\label{ConservedQuant2}
\left.\mathcal{F}(\lambda)\left(\left(u^1_\nu \; u^1_{\lambda\nu}\dots u^1_{K_1\nu} \dots u^q_\nu\; u^q_{\lambda\nu}\dots u^q_{K_q\nu}\right)\left.\left(\frac{\partial\left(\widetilde{\zede_1},\dots,\widetilde{\zede_\ell}\right)}{\partial\left(\zede_1,\dots,\zede_\ell\right)}\right)^T\right|_{g=\rho(\zede)}\mathcal{C}\right)\right|_{\lambda=0}^{\lambda=\varlambda}=0.
\end{equation}

Letting 
$$\left.\frac{\partial \widetilde{\zede_j}}{\partial \nu}\right|_{g=e}=\frac{\partial \zede_j}{\partial \nu}=\omega^j_i=\left.\frac{\partial \widetilde{\zede_j}}{\partial \varepsilon_i}\right|_{g=e}, \quad \textrm{for}\quad i=1,\dots,r,\quad j=1,\dots,\ell$$
the vector $\left(u^1_\nu \; u^1_{\lambda\nu}\dots u^1_{K_1\nu} \dots u^q_\nu\; u^q_{\lambda\nu}\dots u^q_{K_q\nu}\right)$ can be substituted by every single row of the matrix of infinitesimals $\Omega(\zede)$, as defined in \eqref{MatrixInfs}. Then Equation \eqref{ConservedQuant2} becomes
$$\left.\mathcal{F}(\lambda)\left(\boldsymbol{\omega}_i(\zede)^T\left.\left(\frac{\partial\left(\widetilde{\zede_1},\dots,\widetilde{\zede_\ell}\right)}{\partial\left(\zede_1,\dots,\zede_\ell\right)}\right)^T\right|_{g=\rho(\zede)}\mathcal{C}\right)\right|_{\lambda=0}^{\lambda=\varlambda}=0, \quad \textrm{for}\quad i=1,\dots,r.$$
These $r$ equations can be written in matrix form as
$$\left.\mathcal{F}(\lambda)\left(\Omega(\zede)\left.\left(\frac{\partial\left(\widetilde{\zede_1},\dots,\widetilde{\zede_\ell}\right)}{\partial\left(\zede_1,\dots,\zede_\ell\right)}\right)^T\right|_{g=\rho(\zede)}\mathcal{C}\right)\right|_{\lambda=0}^{\lambda=\varlambda}=\mathbf{0}.$$

Using Equation \eqref{essentialPiece} we can rewrite the above matrix form as
$$\left.\mathcal{F}(\lambda)\left(\left.\mathcal{A}d(g)^{-1}\Omega(\widetilde{\zede})\right|_{g=\rho(\zede)}\mathcal{C}\right)\right|_{\lambda=0}^{\lambda=\varlambda}=\mathbf{0},$$
which simplifies to
$$\mathcal{F}(\varlambda)\mathcal{A}d(\rho)^{-1}\Omega(I)\mathcal{C}=\mathbf{c},$$
which completes the proof.\hfill $\Box$
\end{proof}

When the Lagrangian does not depend on $A$, the structure of these generalised conservation laws reduces to the structure of the classical conservation laws presented in \textcite{GoncalvesMansfield2011}. Finally, let us determine the conserved quantities from our motivating example.

\begin{example}
Consider once more the invariantised Herglotz problem
$$\mathcal{D}_\lambda A = \dfrac{1}{2}\eta^2-\alpha A.$$
To obtain the generalised conserved quantities we need to compute $\mathcal{F}(\varlambda)$, $\mathcal{A}d(\rho)^{-1}$, $\Omega(I)$ and $\mathcal{C}$. As $\mathcal{F}(\varlambda)$ and $\mathcal{A}d(g)$ have already been computed previously, only the latter two are left to determine together with the evaluation of $\mathcal{A}d(g)$, \eqref{AdgMotivating}, at $g=\rho$, \eqref{frame_param}. We shall start with $\mathcal{C}$, as it will inform us how to compute $\Omega(I)$. The vector $\mathcal{C}$ corresponds to the coefficients of the $I^\alpha_{K\nu}$ in the boundary terms. As there was only one integration by parts, namely
$$\mathsf{E}^\eta_{\mathcal{F}}(\mathcal{L})\mathcal{D}_\lambda I^t_2,$$
the only boundary term obtained was
$$\mathcal{D}_\lambda\left(\mathsf{E}^\eta_{\mathcal{F}}(\mathcal{L})I^t_2\right).$$
Thus, $\mathcal{C}=\begin{pmatrix}\mathsf{E}^\eta_{\mathcal{F}}(\mathcal{L}) \end{pmatrix}$. The invariants involving $\nu$ in the boundary terms inform us which infinitesimals must compose $\Omega(I)$. As there was only $I^t_2$ in the boundary terms, $\Omega(I)$ will only be composed of the invariantised infinitesimals regarding $t$, namely $\phi^{t}_{,a}(I)$, $\phi^{t}_{,b}(I)$ and $\phi^{t}_{,\theta}(I)$. Hence,
$$\Omega(I)=\begin{pmatrix}
1\\
0\\
0
\end{pmatrix}.$$
Multiplying $\Omega(I)$ and $\mathcal{C}$ together yields the vector of invariants, $\boldsymbol{\upsilon}$. Putting all the pieces together we get the generalised conserved quantities
$$\mathrm{e}^{\alpha\lambda}\begin{pmatrix}
\dfrac{t_\lambda}{\eta} & -\dfrac{x_\lambda}{\eta} & 0\\
-\dfrac{x_\lambda}{\eta} & \dfrac{t_\lambda}{\eta} & 0\\
\dfrac{tx_\lambda-xt_\lambda}{\eta} & \dfrac{xx_\lambda-tt_\lambda}{\eta} & 1
\end{pmatrix}\begin{pmatrix}
\eta\\
0\\
0
\end{pmatrix}=\begin{pmatrix}
c_1\\
c_2\\
c_3
\end{pmatrix}.$$
\end{example}

In the remainder of the present paper, we shall look at Herglotz variational problems that are invariant under the action of the restricted Lorentz group $SO^+(1,2)$.

\section{Herglotz problems invariant under $\boldsymbol{SO^+(1,2)}$}\label{HerglotzApp}

Consider the $SO^+(1,2)$ group action given by
\begin{equation}\label{SO2action}
\begin{pmatrix}
\widetilde{t}\\
\widetilde{x}\\
\widetilde{y}
\end{pmatrix}=
\begin{pmatrix}
1 & 0 & 0\\
0 & \cos{\theta} & -\sin{\theta}\\
0 & \sin{\theta} & \cos{\theta}
\end{pmatrix}
\begin{pmatrix}
\cosh{\varsigma} & 0 & -\sinh{\varsigma}\\
0 & 1 & 0\\
-\sinh{\varsigma} & 0 & \cosh{\varsigma}
\end{pmatrix}
\begin{pmatrix}
\cosh{\varphi} & -\sinh{\varphi} & 0\\
-\sinh{\phi} & \cosh{\phi} & 0\\
0 & 0 & 1
\end{pmatrix}
\begin{pmatrix}
t\\
x\\
y
\end{pmatrix}.
\end{equation}

Furthermore, let us consider the cross section $\mathcal{K}$ to be defined by $\widetilde{x}=0$, $\widetilde{y}=0$ and $\widetilde{y_\lambda}=0$. Solving for these normalisation equations we obtain the frame
\begin{align*}
\theta=\arctan\left({\frac{y(tt_\lambda-xx_\lambda)-y_\lambda(t^2-x^2)}{\sqrt{t^2-x^2-y^2}(tx_\lambda-xt_\lambda)}}\right), \; \varsigma=\textrm{arctanh}\left({\frac{y}{\sqrt{t^2-x^2}}}\right), \; \varphi = \textrm{arctanh}\left({\frac{x}{t}}\right),
\end{align*}
in parametric form.

The lowest order differential invariants are
\begin{align*}
&I^t=\sqrt{t^2-x^2-y^2},\quad I^x_1=\frac{\sqrt{(yt_\lambda-ty_\lambda)^2+(tx_\lambda-xt_\lambda)^2-(xy_\lambda-yx_\lambda)^2}}{\sqrt{t^2-x^2-y^2}},\\
&I^y_{11}=\frac{t(x_\lambda y_{\lambda\lambda}-y_\lambda x_{\lambda\lambda})-x(t_\lambda y_{\lambda\lambda}-y_\lambda t_{\lambda\lambda})+y(t_\lambda x_{\lambda\lambda}-x_\lambda t_{\lambda\lambda})}{\sqrt{(yt_\lambda-ty_\lambda)^2+(tx_\lambda-xt_\lambda)^2-(xy_\lambda-yx_\lambda)^2}},
\end{align*}
where $I^t$ represents the proper time interval between the origin and the event $(t,x,y)$, $I^x_1$ the timelike relativistic areal velocity and $I^y_{11}$ the orbital curvature of the worldline. For simplicity we will denote $I^t$, $I^x_1$ and $I^y_{11}$ by $\tau$, $h$ and $\chi$, respectively.

Suppose we have a Herglotz variational problem that is invariant under the $SO^+(1,2)$ action defined in \eqref{SO2action}. This means the variational problem can be written in terms of the generating differential invariants $\tau$, $h$ and $\chi$ and their derivatives, say
$$\mathcal{D}_\lambda A = \mathcal{L}(\tau,\tau_\lambda,\tau_{\lambda,\lambda},h,h_\lambda,\chi,A).$$ 

Let us fix parametrisation as Minkowski arc length by imposing
$$t_\lambda^2-x_\lambda^2-y_\lambda^2=1.$$
Hence, by Theorem \ref{varTheorem} let us consider instead the constrained Herglotz variational problem
\begin{equation}\label{InvHerglotzProb}
\mathcal{D}_\lambda A=\mathcal{L}(\tau,\tau_\lambda,\tau_{\lambda\lambda},h,h_\lambda,\chi,A)-\mu(\lambda)\mathcal{R}(\tau_\lambda,h),
\end{equation}
where $\mathcal{R}(\tau_\lambda,h)$ corresponds to the invariantised Minkowski arc length constraint.

To effect the variation, we introduce a new dummy variable $\nu$ that is invariant under the $SO^+(1,2)$ action. From this, a set of syzygies arises, namely,
\begin{align*}
&\mathcal{D}_\nu \tau=I^t_2,\\[10pt]
&\mathcal{D}_\nu h=\frac{h}{\tau}I^t_2+\left(-\frac{\tau_\lambda}{\tau}+\mathcal{D}_\lambda\right)I^x_2-\frac{\chi}{h}I^y_2,\\[10pt]\nonumber
&\mathcal{D}_\nu \chi=\frac{\chi}{\tau}I^t_2+\left(\frac{\chi_\lambda}{h}-\frac{2\chi h_\lambda}{h^2}-\frac{\chi\tau_\lambda}{h\tau}+\frac{2\chi}{h}\mathcal{D}_\lambda\right)I^x_2\\[10pt]
&\qquad+\left(\frac{\tau_\lambda h_\lambda}{\tau h}+\frac{\tau_\lambda^2}{\tau^2}-\frac{\tau_{\lambda\lambda}}{\tau}-\frac{h^2}{\tau^2}-\frac{\chi^2}{h^2}-\frac{\mathcal{D}_\lambda(\tau h)}{\tau h}\mathcal{D}_\lambda+\mathcal{D}_\lambda^2\right)I^y_2.
\end{align*}

Using the results obtained in section \ref{IGVP}, that is the formula for the invariantised generalised Euler-Lagrange equations, \eqref{InvELeqns}, and the formula for the generalised conserved quantities, \eqref{structureCQ}, we obtain, respectively,
\begin{align*}
&\mathsf{E}^t(L)=\mathsf{E}^\tau_{\mathcal{F}}(\mathcal{L})+\mathcal{D}_\lambda\left(\mathcal{F}\mu\frac{\partial R}{\partial \tau_\lambda}\right)+\frac{h}{\tau}\left(\mathsf{E}^h_{\mathcal{F}}(\mathcal{L})-\mathcal{F}\mu\frac{\partial R}{\partial h}\right)+\frac{\chi}{\tau}\mathsf{E}^\chi_{\mathcal{F}}(\mathcal{L})=0,\\
&\mathsf{E}^x(L)=-\mathcal{D}_\lambda\left(\tau\left(\mathsf{E}^h_{\mathcal{F}}(\mathcal{L})-\mathcal{F}\mu\frac{\partial R}{\partial h}\right)\right)-\frac{2\chi\tau}{h}\mathcal{D}_\lambda\left(\mathsf{E}^\chi_{\mathcal{F}}(\mathcal{L})\right)-\frac{\mathcal{D}_\lambda(\chi\tau)}{h}\mathsf{E}^\chi_{\mathcal{F}}(\mathcal{L})=0,\\\nonumber
&\mathsf{E}^y(L)=-\frac{\chi}{h}\left(\mathsf{E}^h_{\mathcal{F}}(\mathcal{L})-\mathcal{F}\mu\frac{\partial R}{\partial h}\right)+\mathcal{D}_\lambda^2(\mathsf{E}^\chi_{\mathcal{F}}(\mathcal{L}))+\frac{\mathcal{D}_\lambda(\tau h)}{\tau h}\mathcal{D}_\lambda(\mathsf{E}^\chi_{\mathcal{F}}(\mathcal{L}))\\
&\qquad\qquad\qquad\qquad\qquad\quad+\left(\mathcal{D}_\lambda\left(\frac{h_\lambda}{h}\right)+\frac{\tau_\lambda h_\lambda}{\tau h}-\frac{h^2}{\tau^2}-\frac{\chi^2}{h^2}\right)\mathsf{E}^\chi_{\mathcal{F}}(\mathcal{L})=0,
\end{align*} 
and 
$$\begin{pmatrix}
\dfrac{t}{\tau} & \dfrac{t_\lambda}{h}-\dfrac{t\tau_\lambda}{\tau h} & -\dfrac{xy_\lambda-yx_\lambda}{\tau h}\\[10pt]
-\dfrac{x}{\tau} & \dfrac{x_\lambda}{h}-\dfrac{x\tau_\lambda}{\tau h} & -\dfrac{yt_\lambda-ty_\lambda}{\tau h}\\[10pt]
\dfrac{y}{\tau} & \dfrac{y\tau_\lambda}{\tau h}-\dfrac{y_\lambda}{h} & \dfrac{tx_\lambda-xt_\lambda}{\tau h}
\end{pmatrix}\begin{pmatrix}
h\mathsf{E}^\chi_{\mathcal{F}}(\mathcal{L})\\
\tau\mathcal{D}_\lambda\left(\mathsf{E}^\chi_{\mathcal{F}}(\mathcal{L})\right)+\dfrac{\tau_\lambda h_\lambda}{h}\mathsf{E}^\chi_{\mathcal{F}}(\mathcal{L})\\
-\tau\mathsf{E}^h_{\mathcal{F}}(\mathcal{L})+\tau\mathcal{F}\mu\dfrac{\partial R}{\partial h}-\dfrac{\chi\tau}{h}\mathsf{E}^\chi_{\mathcal{F}}(\mathcal{L})
\end{pmatrix}=\begin{pmatrix}
c_1\\
c_2\\
c_3
\end{pmatrix},
$$
where $c_i$, for $i=1,2,3$, are constants of integration. Note that $\mathcal{F}$ has been left inside the vector of invariants, as this yields a tidier arrangement of the terms.

In order to allow further analyses, from this point onward we shall only consider generalised variational problems that do not depend on $\chi$, as the syzygy involving $\chi$ is more complicated. In this situation, the invariantised generalised Euler-Lagrange equations reduce to
\begin{align*}
&\mathsf{E}^t(L)=\mathsf{E}^\tau_{\mathcal{F}}(\mathcal{L})+\mathcal{D}_\lambda\left(\mathcal{F}\mu\frac{\partial R}{\partial \tau_\lambda}\right)+\frac{h}{\tau}\left(\mathsf{E}^h_{\mathcal{F}}(\mathcal{L})-\mathcal{F}\mu\frac{\partial R}{\partial h}\right)=0,\\
&\mathsf{E}^x(L)=-\mathcal{D}_\lambda\left(\tau\left(\mathsf{E}^h_{\mathcal{F}}(\mathcal{L})-\mathcal{F}\mu\frac{\partial R}{\partial h}\right)\right)=0,\\
&\mathsf{E}^y(L)=-\frac{\chi}{h}\left(\mathsf{E}^h_{\mathcal{F}}(\mathcal{L})-\mathcal{F}\mu\frac{\partial R}{\partial h}\right)=0,
\end{align*}
and the conserved quantities simplify to
$$\begin{pmatrix}
\dfrac{t}{\tau} & \dfrac{t_\lambda}{h}-\dfrac{t\tau_\lambda}{\tau h} & -\dfrac{xy_\lambda-yx_\lambda}{\tau h}\\[10pt]
-\dfrac{x}{\tau} & \dfrac{x_\lambda}{h}-\dfrac{x\tau_\lambda}{\tau h} & -\dfrac{yt_\lambda-ty_\lambda}{\tau h}\\[10pt]
\dfrac{y}{\tau} & \dfrac{y\tau_\lambda}{\tau h}-\dfrac{y_\lambda}{h} & \dfrac{tx_\lambda-xt_\lambda}{\tau h}
\end{pmatrix}\begin{pmatrix}
0\\
0\\
-\tau\mathsf{E}^h_{\mathcal{F}}(\mathcal{L})+\tau\mathcal{F}\mu\dfrac{\partial R}{\partial h}
\end{pmatrix}=\begin{pmatrix}
c_1\\
c_2\\
c_3
\end{pmatrix}.$$

Solving $\mathsf{E}^y(L)=0$ yields
$$\chi=0 \qquad\vee\qquad \mu=\frac{1}{\mathcal{F}\partial R/\partial h}\;\mathsf{E}^h_{\mathcal{F}}(\mathcal{L}), \qquad h\ne 0.$$

Thus, this invariantised generalised Euler-Lagrange system has solutions split into two classes, according to whether $\chi$ vanishes or not. So for the case where $h\ne 0$ and $\chi\ne 0$, we must take 
$$\mu=\frac{1}{\mathcal{F}\partial R/\partial h}\;\mathsf{E}^h_{\mathcal{F}}(\mathcal{L}).$$ 
Substituting $\mu$ into $\mathsf{E}^t(L)=0$ produces
$$\mathcal{D}_\lambda\left(\frac{\tau_\lambda}{h}\;\mathsf{E}^h_{\mathcal{F}}(\mathcal{L})\right)=\mathsf{E}^\tau_{\mathcal{F}}(\mathcal{L}),$$
while $\mathsf{E}^x(L)=0$ is automatically satisfied. Furthermore, substituting $\mu$ in the generalised conserved quantities, yields a null vector of invariants, which reduces them to trivial generalised conservation laws. Thus, for $h\ne 0$ and $\chi\ne 0$, to find the extremals we must solve
$$\mathcal{D}_\lambda\left(\frac{\tau_\lambda}{h}\;\mathsf{E}^h_{\mathcal{F}}(\mathcal{L})\right)=\mathsf{E}^\tau_{\mathcal{F}}(\mathcal{L}),$$
together with the constraint $\tau_\lambda^2-h^2=1$, for $\tau$ and $h$. In principle, once $\tau$ and $h$ are known, the original variables, $t$, $x$ and $y$, can be reconstructed by acting on the zeroth invariants, $(\tau,0,0)$, with $\rho(\zede)^{-1}$. For further information on invariant reconstruction and moving frames, see \textcite{Mansfield2010}.

Considering now the case where $h\ne 0$ and $\chi=0$, we use $\mathsf{E}^x(L)=0$ to solve for the Lagrange multiplier $\mu$. Hence, we obtain
$$\mu=\frac{\tau\mathsf{E}^h_{\mathcal{F}}(\mathcal{L})-k_1}{\tau\mathcal{F}\partial R/\partial h},$$ 
where $k_1$ is a constant of integration.

Substituting $\mu$ in $\mathsf{E}^t(L)=0$ and in the vector of invariants of the generalised conserved quantities, simplifies them to
$$\mathcal{D}_\lambda\left(\frac{\tau_\lambda}{h}\mathsf{E}^h_{\mathcal{F}}(\mathcal{L})-\frac{k_1\tau_\lambda}{\tau h}\right)=\mathsf{E}^\tau_{\mathcal{F}}(\mathcal{L})+\frac{k_1h}{\tau^2}$$
and
$$\boldsymbol{\upsilon}=(0,0,-k_1)^T,$$
respectively. 

This vector of invariants reduces the generalised conservation laws to $\mathcal{A}d(\rho)\mathbf{c}=\boldsymbol{\upsilon}$, i.e.
\begin{align}\label{cons1}
&\dfrac{t}{\tau}c_1+\dfrac{x}{\tau}c_2-\dfrac{y}{\tau}c_3=0,\\[10pt]\label{cons2}
&\left(\dfrac{t_\lambda}{h}-\dfrac{t\tau_\lambda}{\tau h}\right)c_1+\left(\dfrac{x_\lambda}{h}-\dfrac{x\tau_\lambda}{\tau h}\right)c_2-\left(\dfrac{y_\lambda}{h}-\dfrac{y\tau_\lambda}{\tau h}\right)c_3=0,\\[10pt]\label{cons3}
& \left(\dfrac{xy_\lambda-yx_\lambda}{\tau h}\right)c_1-\left(\dfrac{yt_\lambda-ty_\lambda}{\tau h}\right)c_2-\left(\dfrac{tx_\lambda-xt_\lambda}{\tau h}\right)c_3=-k_1.
\end{align}

Solving \eqref{cons1} for $t$ gives
\begin{equation}\label{solforT}
t=c_3y-c_2x,
\end{equation}
which forces \eqref{cons2} to be automatically satisfied and simplifies \eqref{cons3} to
\begin{equation}\label{diffEqforXY}
\dfrac{xy_\lambda-yx_\lambda}{c_1\tau h}=\dfrac{1}{k_1},
\end{equation}
where we have used the equality $c_1^2-c_2^2-c_3^2=-k_1^2$, which comes from the relation between the generalised conserved quantities. Equations \eqref{solforT} and \eqref{diffEqforXY}, together with $t_\lambda^2-x_\lambda^2-y_\lambda^2=1$, allow to solve for $t$, $x$ and $y$, once we have solved $\mathsf{E}^t(L)=0$ and the invariantised constraint for $\tau$ and $h$. In theory, this might be simpler than rebuilding the solution by acting on $(\tau,0,0)$ with $\rho(\zede)^{-1}$, as this implies solving an extra three differential equations of higher-order.

This illustrates that, for Herglotz variational problems that are invariant under the restricted Lorentz group $SO^+(1,2)$, structural knowledge from the invariantised generalised Euler-Lagrange system and its conserved quantities can significantly simplify the computation of the extremals.

\section{Conclusion}

This work has established an intrinsic geometric and algebraic framework for one-dimensional higher-order Herglotz variational problems, including invariantised Euler–Lagrange equations and semi‑invariant conserved quantities. These results clarify the structural mechanisms underlying nonconservative variational systems and extend the geometric picture beyond the classical conservative setting. They also make explicit how symmetry acts within the contact structure, thereby integrating invariance and nonconservative effects into a single geometric picture.

Future work could extend the results presented here to generalised variational problems with several independent variables, whether or not these transform under the group action.

\vspace{1cm}

\noindent\textsf{\textbf{Acknowlegments}}
\begin{sloppypar}\noindent Torres is supported by CIDMA under the Portuguese Foundation for Science and Technology (FCT), Grant UID/04106/2025 \url{https://doi.org/10.54499/UID/04106/2025}). Frederico received financial support from Funda\c{c}\~{a}o Cearense de Apoio ao Desenvolvimento Cient\'{i}fico e Tecnol\'{o}gico (FUNCAP), Agency Process No. BP6-0241-
00049.01.00/25.
\end{sloppypar}

\vspace{1cm}
\noindent\textsf{\textbf{Authors contributions}}\\ 
\noindent Gast\~{a}o Frederico proposed the idea. T\^{a}nia Gon\c{c}alves developed the theory and proved the results presented in this paper. All authors discussed and contributed to the final version of this manuscript.

\vspace{1cm}
\noindent\textsf{\textbf{Data availability}}\\
\noindent No datasets were generated or analysed during the current study.

\section*{Declaration}
\noindent\textsf{\textbf{Conflict of interest}}\\ 
\noindent The authors declare that there are no conflicts of interest associated with the present manuscript.

\newpage
\printbibliography

\end{document}